\newtheorem{definition}{Definition}[section]
\newtheorem{scaling}{Scaling assumption}
\newtheorem{theorem}[definition]{Theorem}
\newtheorem{remark}[definition]{Remark}
\newtheorem{corollary}[definition]{Corollary}
\newtheorem{proposition}[definition]{Proposition}
\newcommand{\R}{\mathbb{R}}
\newcommand{\G}{\mathcal{G}}
\newcommand{\C}{\mathcal{C}}
\newcommand{\vecsym}[1]{\boldsymbol{#1}}
\renewcommand{\vec}[1]{\mathbf{#1}}
\newcommand\doverline[1]{%
	\tikz[baseline=(nodeAnchor.base)]{
		\node[inner sep=0] (nodeAnchor) {$#1$}; 
		\draw[line width=0.1ex,line cap=round] 
		($(nodeAnchor.north west)+(0.0em,0.25ex)$) 
		--
		($(nodeAnchor.north east)+(0.0em,0.25ex)$) 
		($(nodeAnchor.north west)+(0.0em,0.55ex)$) 
		--
		($(nodeAnchor.north east)+(0.0em,0.55ex)$) 
		;
}}
\title{Continuous Limits for Constrained Ensemble Kalman Filter}
\date{\today}
\author{
	Michael Herty  and  Giuseppe Visconti \medskip\\
	{\small\it Institut f\"{u}r Geometrie und Praktische Mathematik (IGPM)} \\
	{\small\it RWTH Aachen University} \\
	{\small\it Templergraben 55, 52062 Aachen, Germany
	}
}
\begin{document}

\maketitle

\begin{abstract}
	The Ensemble Kalman Filter method can be used as an iterative particle numerical scheme for state dynamics estimation and control--to--observable identification problems. In applications it may be required to enforce the solution to satisfy equality constraints on the control space. In this work we deal with this problem from a constrained optimization point of view, deriving corresponding optimality conditions. Continuous limits, in time and in the number of particles, allows us to study properties of the method. We illustrate the performance of the method by using test inverse problems from the literature.
\end{abstract}

\paragraph{Mathematics Subject Classification (2010)} 65N21 (Inverse problems), 93E11 (Filtering), 37N35 (Dynamical systems in control), 35Q93 (PDEs in connection with control and optimization) 

\paragraph{Keywords} Inverse problems, constrained optimization, nonlinear filtering methods, DAE systems, mean--field limit

\section{Introduction} \label{sec:introduction}

We are concerned with the following abstract inverse problem or parameter identification problem
\begin{equation} \label{eq:noisyProb}
\vec{y} = \G(\vec{u}) + \vecsym{\eta}
\end{equation}
where $\G:X \to Y$ is the (possible nonlinear) forward operator between the finite dimensional Hilbert spaces $X=\R^d$ and $Y=\R^K$, with $d,K\in\mathbb{N}$, $\vec{u}\in X$ is the control, $\vec{y}\in Y$ is the observation and $\vecsym{\eta}$ is  observational noise. Typically, $\vecsym{\eta}$ is not explicitly known but only information on its distribution is available. Throughout the paper we assume that $\vecsym{\eta} \sim \mathcal{N}(\vec{0},\vecsym{\Gamma}^{-1})$, i.e.~the observational noise is normally distributed with zero mean and given covariance matrix $\vecsym{\Gamma}^{-1}\in\R^{K\times K}$. Given noisy measurements or observations $\vec{y}$ and the known mathematical model $\G$, we are interested in finding the corresponding control $\vec{u}$. Inverse problems, in particular in view of a possible
ill--posedness, have been discussed in vast amount of literature and we refer to~\cite{EnglHankeNeubauer1996} for an introduction and further references.

We are interested in particle methods for solving numerically~\eqref{eq:noisyProb}. In particular, in the following we will investigate the Ensemble Kalman Filter (EnKF). While this method has already been introduced more than ten years ago~\cite{Evensen1994}, recent theoretical 
progress have been done both for the continuous time limit~\cite{schillingsetal2018,SchillingsPreprint,ChadaStuartTong2019,schillingsstuart2017,schillingsstuart2018} and the mean--field limit on the number of the ensemble members~\cite{CarrilloVaes,DingLi2019,Stuart2019MFEnKF,HertyVisconti2019}.

Solving inverse problems or identification problems arising in realistic applications usually requires to consider constraints on the unknown control, as well as on the data. This has been demonstrated in the recent literature in several research fields, as for instance in weather forecasting~\cite{McLaughlin2014}, milling process~\cite{SchwenzerViscontiEtAl2019} and process control~\cite{Teixeira2010}. Including constraints in Kalman filtering is usually done via optimization techniques, see e.g.~\cite{aravkin2014optimization}. Recently, algorithms for the ensemble Kalman filter for constrained problems have been studied in~\cite{Stuart2019CEnKF,ChadaShillingsWeissmann2019. In particular, in~\cite{Stuart2019CEnKF}} the authors propose to solve the constrained compromise step for all the ensemble members that do not fulfill the constraints. However, we notice that no optimality conditions have been formally derived and that the solution of a large number of constrained optimization problems may be required. Instead, in~\cite{ChadaShillingsWeissmann2019} the authors focus on linear box--constraints and, via a projection--based method, derive a new variant of the EnKF and study the continuous time limit.

Here, we follow a similar approach to~\cite{Stuart2019CEnKF} by incorporating equality constraints in the compromise or update step of the unconstrained EnKF. Using the Lagrange theory for optimization problems, we derive first order necessary optimality conditions and discuss the link to game theory. The formulation of the optimality conditions shows that the unconstrained EnKF automatically satisfies linear equality constraints. Our analysis is therefore mainly focused to the case of nonlinear equality constraints. Under suitable scaling assumptions, we then compute the corresponding continuous time limit of the optimality conditions, which leads to a system of differential algebraic equations (DAEs). This reformulation allows us to perform an analysis of the method. A further continuous limit is analyzed in the mean--field limit, i.e.~in the regime of infinitely many ensembles.

The main contributions can be summarized in the following points:
\begin{itemize}
	\item We explicitly derive the ensemble Kalman update for inverse problems which automatically encapsulates and satisfies general nonlinear equality constraints. In this way, we avoid the possible solution of many constrained optimization problems for those ensemble members which do not satisfy the constraints.
	\item We compute and analyze the corresponding continuous limits, in time and in the number of the ensemble members, of the constrained ensemble Kalman update formula.
\end{itemize}

The paper is organized as follows. In Section~\ref{sec:constrainedEnKF} we briefly review the derivation of the EnKF and then we compute first order necessary optimality conditions for the constrained optimization problem, assuming only equality constraints. In Section~\ref{sec:continuousLimits} we study continuous limits, in time and in the number of ensembles, of the optimality conditions, providing an analysis for the resulting system of DAEs and mean--field equation. In Section~\ref{sec:numerics} we investigate the ability of the method to provide solution to an inverse problem with two types of equality constraints. Finally, we summarize the results in Section~\ref{sec:conclusion}. We report in Appendix~\ref{app:proof} detailed elaboration of computations in the proof of Proposition~\ref{th:optimality}. Appendix~\ref{app:symbols} is aimed to list the most used mathematical symbols throughout the paper.

\section{Ensemble Kalman filter for constrained problems} \label{sec:constrainedEnKF}

A particular numerical method for solving~\eqref{eq:noisyProb} is the Ensemble Kalman Filter (EnKF), which has been originally introduced to estimate state variables, parameters, etc.~of stochastic dynamical systems. The estimations are based on system dynamics and measurement data that are possibly perturbed by known noise. Therefore, in order to apply the EnKF to the inverse problem~\eqref{eq:noisyProb}, this is usually rewritten as a partially observed and artificial dynamical system based
on state augmentation, e.g.~cf.~\cite{Stuart2019CEnKF,Anderson2001,iglesiaslawstuart2013}.

Let us introduce the new variable $\vec{w} = \G(\vec{u})$, so that~\eqref{eq:noisyProb} can be reformulated as
\begin{align*}
\vec{w} &= \G(\vec{u})\\
\vec{y} &= \vec{w} + \vecsym{\eta}.
\end{align*}
Taking $\vec{y}^{n+1} = \vec{y}$ and $\vecsym{\eta}^{n+1} = \vecsym{\eta}$ as the given data and the given noise, respectively, we obtain the following dynamical system:
\begin{equation} \label{eq:dynSysExtended}
\begin{aligned}
\vec{u}^{n+1} &= \vec{u}^n\\
\vec{w}^{n+1} &= \G(\vec{u}^n)\\
\vec{y}^{n+1} &= \vec{w}^{n+1} + \vecsym{\eta}^{n+1}.
\end{aligned}
\end{equation}

\begin{remark}
	We stress the fact that here $n$ is an artificial time index, while in the case of a dynamic inverse problem physical time is included in $\G$. Moreover, in the following we keep the notation $\vec{y}^{n+1}$ even if we consider the free noise case where no perturbation in time is added to the initial noisy observation.
\end{remark}

In the following we briefly recall the derivation of the EnKF.
We use a compact formulation of the artificial dynamic process by defining $\vec{v} = (\vec{u},\vec{w})^\intercal \in \R^{d+K}$, $\vecsym{\Xi}(\vec{v}) = (\vec{u},\G(\vec{u}))^\intercal \in \R^{d+K}$ and the observational matrices $\vec{H} = [\vec{0},\vec{I}] \in \R^{K\times(d+K)}$, $\vec{H}^\perp=[\vec{I},\vec{0}] \in \R^{d\times(d+K)}$. We note that $\vec{H}\vec{v} = \vec{w}$ and $\vec{H}^\perp\vec{v} = \vec{u}$. Then, we rewrite~\eqref{eq:dynSysExtended} in the typical setting where the EnKF is applied for the solution of the inverse problem~\eqref{eq:noisyProb}:
\begin{equation} \label{eq:dynSys}
\begin{aligned}
\vec{v}^{n+1} &= \vecsym{\Xi}(\vec{v}^n)\\
\vec{y}^{n+1} &= \vec{H} \vec{v}^{n+1} + \vecsym{\eta}^{n+1}.
\end{aligned}
\end{equation}

Let us introduce $\{ \vec{v}^{j,n} \}_{j=1}^{J}$ the $J$ particles (ensembles) at time $n$. The objective of the EnKF is to reach a compromise $\vec{v}^{j,n+1}$ between the background estimate $\hat{\vec{v}}^{j,n+1}$ of the model and additional information provided by data $\vec{y}^{n+1}$, for each ensemble member. The state of the particles at time $n+1$ is predicted using the dynamics model~\eqref{eq:dynSys} to obtain
\begin{equation} \label{eq:prediction}
\hat{\vec{v}}^{j,n+1} = \vecsym{\Xi}(\vec{v}^{j,n}), \quad j=1,\dots,J.
\end{equation}
Let $\vec{C}^{n+1} \in \R^{(d+K)\times(d+K)}$ be the empirical covariance matrix characterizing the uncertainties on predictions $\hat{\vec{v}}^{j,n+1}$. We have
$$
\vec{C}^{n+1} = \frac{1}{J} \sum_{k=1}^J (\hat{\vec{v}}^{j,n+1} - \overline{\hat{\vec{v}}}^{n+1}) \otimes (\hat{\vec{v}}^{j,n+1} - \overline{\hat{\vec{v}}}^{n+1}), \quad
\overline{\hat{\vec{v}}}^{n+1} = \frac{1}{J} \sum_{k=1}^J \hat{\vec{v}}^{j,n+1}.
$$
It is also easy to check that
\begin{equation} \label{eq:covMatrix}
	\vec{C}^{n+1} = \begin{bmatrix} \vec{C}^{n+1}_{\vec{u}\vec{u}} & \vec{C}^{n+1}_{\vec{u}\vec{w}} \\ \vec{C}^{{n+1}^\intercal}_{\vec{u}\vec{w}} & \vec{C}^{n+1}_{\vec{w}\vec{w}}\end{bmatrix}
\end{equation}
where, using the definition of $\vecsym{\Xi}$,
\begin{gather*}
	\vec{C}^{n+1}_{\vec{u}\vec{u}} = \frac{1}{J} \sum_{k=1}^J (\vec{u}^{j,n} - \overline{\vec{u}}^{n}) \otimes (\vec{u}^{j,n} - \overline{\vec{u}}^{n}), \quad
	\vec{C}^{n+1}_{\vec{u}\vec{w}} = \frac{1}{J} \sum_{k=1}^J (\vec{u}^{j,n} - \overline{\vec{u}}^{n}) \otimes (\G(\vec{u}^{j,n}) - \overline{\G}^{n}) \\
	\vec{C}^{n+1}_{\vec{w}\vec{w}} = \frac{1}{J} \sum_{k=1}^J (\G(\vec{u}^{j,n}) - \overline{\G}^{n}) \otimes (\G(\vec{u}^{j,n}) - \overline{\G}^{n})
\end{gather*}
are the empirical covariance matrices depending on the ensemble set $\{\vec{u}^{j,n}\}_{j=1}^J$ at iteration $n$ and on $\{\G(\vec{u}^{j,n})\}_{j=1}^J$, i.e.~the image of $\{\vec{u}^{j,n}\}_{j=1}^J$ at iteration $n$, and where we define by $\overline{\vec{u}}^n$ and $\overline{\G}^n$ the mean of $\{\vec{u}^{j,n}\}_{j=1}^J$ and $\{\G(\vec{u}^{j,n})\}_{j=1}^J$, namely
$$
	\overline{\vec{u}}^n = \frac{1}{J} \sum_{j=1}^J \vec{u}^{j,n}, \quad \overline{\G}^n = \frac{1}{J} \sum_{j=1}^J \G(\vec{u}^{j,n}).
$$

\begin{remark} \label{rm:indexCNotation}
	The empirical covariance matrix of the ensembles is in fact computed by information at iteration step $n$. However, we keep the notation $\vec{C}^{n+1}_{\vec{u}\vec{u}}$.
\end{remark}

The compromise that is sought should have $\vec{v}^{j,n+1}$ close to $\hat{\vec{v}}^{j,n+1}$ and $\vec{H}\vec{v}^{j,n+1}$ close to $\vec{y}^{n+1}$. Then $\vec{v}^{j,n+1}$ is defined by the solution of the following minimization problem:
\begin{equation} \label{eq:compromise}
\vec{v}^{j,n+1} = \arg\min_{\vec{v}} \mathcal{J}^{j,n}(\vec{v}) := \frac12 \left\| \vec{y}^{n+1} - \vec{H}\vec{v} \right\|^2_{\vecsym{\Gamma}^{-1}} + \frac12 \left\| \vec{v} - \hat{\vec{v}}^{j,n+1} \right\|^2_{\vec{C}^{n+1}}.
\end{equation}
Here, we recall that $\vec{\Gamma}^{-1} \in \R^{K\times K}$ is the covariance matrix characterizing the uncertainties on data $\vec{y}^{n+1}$ and that we assume $\vecsym{\eta} \sim \mathcal{N}(\vec{0},\vecsym{\Gamma}^{-1})$. Throughout the work, we also assume that $\vecsym{\Gamma}^{-1}$ is positive definite.
We notice that the first term of $\mathcal{J}^{j,n}(\vec{v})$ corresponds to the least squares functional $\Phi$ given by
\begin{equation} \label{eq:leastSqFnc}
\Phi(\vec{u},\vec{y}) := \frac12 \left\| \vecsym{\Gamma}^{\frac12} (\vec{y} - \G(\vec{u})) \right\|^2.
\end{equation}
Further, $\mathcal{J}^{j,n}(\vec{v})$ can be written as
\begin{equation} \label{eq:compromiseEquivalent}
\mathcal{J}^{j,n}(\vec{v}) = \frac12 \vec{v}^\intercal (\vec{H}^\intercal \vecsym{\Gamma} \vec{H} + \vec{C}^{{n+1}^{-1}}) \vec{v} - (\vec{C}^{{n+1}^{-\intercal}} \hat{\vec{v}}^{j,n+1} + \vec{H}^\intercal \vecsym{\Gamma}^\intercal \vec{y}^{n+1} )^\intercal \vec{v} + \overline{\mathcal{J}}
\end{equation}
where $\overline{\mathcal{J}}$ collects all the terms independent of $\vec{v}$. The EnKF update formula is derived by imposing first order necessary condition $\nabla_{\vec{v}} \mathcal{J}^{j,n}(\vec{v}) = \vec{0}$. For an extensive discussion, we refer e.g.~to~\cite{Stuart2019CEnKF,iglesiaslawstuart2013}. In particular, we point--out here that the derivation of the EnKF update formula through~\eqref{eq:compromiseEquivalent} requires that the empirical covariance $\vec{C}^{n+1}$ is positive definite. This is a strong assumption which cannot be guaranteed in a general fashion and it is usually overcame by introducing a shifting of $\vec{C}^{n+1}$ as $\vec{C}_\varepsilon^{n+1} = \vec{C}^{n+1}+\varepsilon\vec{I}$, where $\vec{I}\in\R^{(d+K)\times(d+K)}$ is the identity matrix, and study the limit of $\varepsilon \to 0$. We refer to~\cite{Stuart2019CEnKF}. For the derivation of the EnKF update formula for constrained problems we will follow this approach, see Proposition~\ref{th:optimality} in the following section.

We recall here that in order to help the reader, we report a list of the main mathematical symbols in Appendix~\ref{app:symbols}.

\subsection{Equality constraints on the control space} \label{sec:controlLinConstr}

We consider imposing equality constraints in the space of the control and formulate those as
\begin{equation} \label{eq:constraint}
\mathcal{A}(\vec{u}) = \vec{0}_{\R^m}
\end{equation}
where $\mathcal{A}$ is the vector valued differentiable operator $\mathcal{A} : \vec{u} \in \R^d \mapsto \mathcal{A}(\vec{u}) \in \R^m$, containing the $m \leq d$ constraint values.
\begin{remark}
	Throughout the paper, we use the subscript $(\ell)$, $\ell=1,\dots,N$, to denote the $\ell$--th component of vectors and vector--valued functions on $\R^N$.
\end{remark}
Let $\vec{J}_\mathcal{A} = \left[ \nabla_{\vec{u}} \mathcal{A}_{(1)},\dots,\nabla_{\vec{u}} \mathcal{A}_{(m)} \right]^\intercal$ be the $m \times d$ Jacobian matrix of the operator $\mathcal{A}$. In order to satisfy constraint qualification for equality constraints, we require that if $\vec{u}^*$ is a feasible point then $\vec{J}_\mathcal{A}(\vec{u}^*)$ has full rank, i.e.~$m$. In other words, $\vec{u}^*$ is a regular point of the constraint.

\begin{theorem}[see~\cite{FiaccoMcCormick1968,KuhnTucker1951}]
	If $\vec{u}^*$ is a feasible point and $\vec{J}_\mathcal{A}(\vec{u}^*)$ is a full rank matrix, the constraint qualification LICQ holds at $\vec{u}^*$. Hence, the Lagrange multipliers theorem gives necessary optimality condition.
\end{theorem}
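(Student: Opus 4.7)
The plan is to reduce the statement to two standard textbook facts: a direct verification that the definition of LICQ is met, followed by invocation of the classical Lagrange multiplier theorem. Since the problem involves only equality constraints $\mathcal{A}(\vec{u}) = \vec{0}_{\R^m}$, every one of the $m$ constraints is active at every feasible point $\vec{u}^*$, so there is no need to separate active from inactive indices as one would for inequality constraints.

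First, I would spell out explicitly the correspondence between the hypothesis and the LICQ definition. By construction, the rows of $\vec{J}_\mathcal{A}(\vec{u}^*)$ are exactly the gradient vectors $\nabla_{\vec{u}}\mathcal{A}_{(1)}(\vec{u}^*),\dots,\nabla_{\vec{u}}\mathcal{A}_{(m)}(\vec{u}^*) \in \R^d$. The LICQ at $\vec{u}^*$ requires precisely the linear independence of these gradients of the active constraints. Since $m \leq d$, full row rank of $\vec{J}_\mathcal{A}(\vec{u}^*)$ is equivalent to $\mathrm{rank}\,\vec{J}_\mathcal{A}(\vec{u}^*) = m$, which is in turn equivalent to the linear independence of its rows. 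Hence the constraint qualification LICQ holds at $\vec{u}^*$.

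Second, I would invoke the classical Lagrange multiplier theorem (as stated in~\cite{FiaccoMcCormick1968,KuhnTucker1951}): if $\vec{u}^*$ is a local minimizer of a smooth objective subject to smooth equality constraints $\mathcal{A}(\vec{u}) = \vec{0}_{\R^m}$ and LICQ holds at $\vec{u}^*$, then there exists a unique multiplier $\vecsym{\lambda}^* \in \R^m$ such that the gradient of the Lagrangian vanishes at $(\vec{u}^*,\vecsym{\lambda}^*)$. Combined with the previous step, this gives the stated first-order necessary optimality conditions.

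There is essentially no technical obstacle in the proof — the result is classical and follows by unwinding definitions and citing standard theorems. The only mild care needed is in the bookkeeping: making sure that the row/column convention of $\vec{J}_\mathcal{A}$ matches the gradient convention used in the definition of LICQ (so that the rows, not columns, are the objects whose linear independence must be checked), and noting that for equality constraints the set of active constraints coincides with the full set of constraints, so no additional selection of an active index set is required before applying the LICQ criterion.
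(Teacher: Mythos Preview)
Your proposal is correct and is the standard textbook argument for this classical result. Note that the paper does not supply its own proof of this theorem at all: it is stated with a citation to \cite{FiaccoMcCormick1968,KuhnTucker1951} and used as a black box, so there is no paper proof to compare against.
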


In order to formulate the constrained optimization problem associated to the minimization of~\eqref{eq:compromiseEquivalent}, we define also the extension of the operator $\mathcal{A}$ to the space $\R^{d+K}$ as $\tilde{\mathcal{A}} : \vec{v}\in\R^{d+K} \mapsto \tilde{\mathcal{A}}(\vec{v}) = \mathcal{A}(\vec{H}^\perp \vec{v}) \in \R^m$. Then observe that the Jacobian $\vec{J}_{\tilde{\mathcal{A}}} \in \R^{m\times(d+K)}$ of the operator $\tilde{\mathcal{A}}$ is $\vec{J}_{\tilde{\mathcal{A}}} = [\vec{J}_\mathcal{A},\vec{0}]$ with $\vec{0}\in\R^{m\times K}$ since $\partial_{\vec{v}_{(\ell)}} \mathcal{A}_{(k)} = 0$, for $\ell=d+1,\dots,d+K$ and $k=1,\dots,m$. Moreover, if $\vec{u}^*$ is feasible and $\vec{J}_\mathcal{A}(\vec{u}^*)$ has rank $m$, then $\vec{J}_{\tilde{\mathcal{A}}}(\vec{u}^*)$ has also rank $m$.

The optimization step becomes then
\begin{equation} \label{eq:constrainedProblem}
\min_{\vec{v}} \mathcal{J}^{j,n}(\vec{v}), \quad \text{subject to } \tilde{\mathcal{A}}(\vec{v})=\vec{0}_{\R^m}, \quad j=1,\dots,J.
\end{equation}
It is clear that~\eqref{eq:constrainedProblem} requires to solve $J$ constrained optimization problems sequentially, at each iteration step $n$. We point out that the objective functional~\eqref{eq:compromiseEquivalent} is convex.

\begin{proposition} \label{th:optimality}
	Let $\vec{v}^{j,n+1}$ be an optimal solution to the constrained optimization problem~\eqref{eq:constrainedProblem} for a given $j$ and $n$, satisfying the constraint qualification for the differentiable equality constraint defined by $\tilde{\mathcal{A}}$. Then $\vec{u}^{j,n+1} = \vec{H}^\perp \vec{v}^{j,n+1}$ satisfies the first order necessary optimality conditions
	\begin{equation} \label{eq:updateControl}
	\begin{aligned}
	\vec{u}^{j,n+1} =& \vec{u}^{j,n} + \vec{C}^{n+1}_{\vec{u}\vec{w}} ( \vec{C}^{n+1}_{\vec{w}\vec{w}} + \vecsym{\Gamma}^{-1} )^{-1} (\vec{y}^{n+1} - \G(\vec{u}^{j,n}) )\\
		&+ \vec{C}^{n+1}_{\vec{u}\vec{w}} ( \vec{C}^{n+1}_{\vec{w}\vec{w}} + \vecsym{\Gamma}^{-1} )^{-1} \vec{C}^{{n+1}^\intercal}_{\vec{u}\vec{w}} \vec{J}_\mathcal{A}^\intercal(\vec{u}^{j,n+1}) \vecsym{\lambda}^{j,n+1} - \vec{C}^{n+1}_{\vec{u}\vec{u}} \vec{J}_\mathcal{A}^\intercal(\vec{u}^{j,n+1}) \vecsym{\lambda}^{j,n+1} \\
	\mathcal{A}(\vec{u}^{j,n+1}) =& \vec{0}_{\R^m}.
	\end{aligned}
	\end{equation}
\end{proposition}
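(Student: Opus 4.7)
My plan is to apply the Lagrange multiplier theorem to problem~\eqref{eq:constrainedProblem} and then reduce the resulting stationarity system via the Sherman--Morrison--Woodbury identity together with the block structures of $\vec{H}$, $\vec{H}^\perp$ and $\vec{C}^{n+1}$. The hypothesis of the proposition guarantees that LICQ holds at $\vec{v}^{j,n+1}$---equivalently, $\vec{J}_{\tilde{\mathcal{A}}}$ has full row rank $m$, since it inherits the full rank of $\vec{J}_{\mathcal{A}}$---so first order necessary conditions via Lagrange multipliers apply.

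I would introduce the Lagrangian
$$L(\vec{v},\vecsym{\lambda}) = \mathcal{J}^{j,n}(\vec{v}) + \vecsym{\lambda}^\intercal \tilde{\mathcal{A}}(\vec{v}),$$
differentiate using the explicit quadratic form~\eqref{eq:compromiseEquivalent}, and obtain the stationarity equation
$$\bigl(\vec{H}^\intercal\vecsym{\Gamma}\vec{H} + (\vec{C}^{n+1})^{-1}\bigr)\vec{v}^{j,n+1} = (\vec{C}^{n+1})^{-1}\hat{\vec{v}}^{j,n+1} + \vec{H}^\intercal\vecsym{\Gamma}\vec{y}^{n+1} - \vec{J}_{\tilde{\mathcal{A}}}^\intercal(\vec{v}^{j,n+1})\vecsym{\lambda}^{j,n+1},$$
together with primal feasibility $\tilde{\mathcal{A}}(\vec{v}^{j,n+1})=\vec{0}$. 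Since $\tilde{\mathcal{A}}(\vec{v})=\mathcal{A}(\vec{H}^\perp\vec{v})$, feasibility is precisely the second line of~\eqref{eq:updateControl}.

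Next I would invert the system matrix by Woodbury,
$$\bigl(\vec{H}^\intercal\vecsym{\Gamma}\vec{H} + (\vec{C}^{n+1})^{-1}\bigr)^{-1} = \vec{C}^{n+1} - \vec{C}^{n+1}\vec{H}^\intercal\bigl(\vec{H}\vec{C}^{n+1}\vec{H}^\intercal + \vecsym{\Gamma}^{-1}\bigr)^{-1}\vec{H}\vec{C}^{n+1},$$
combined with the short auxiliary identity $\bigl(\vec{H}^\intercal\vecsym{\Gamma}\vec{H}+(\vec{C}^{n+1})^{-1}\bigr)^{-1}\vec{H}^\intercal\vecsym{\Gamma} = \vec{C}^{n+1}\vec{H}^\intercal(\vec{H}\vec{C}^{n+1}\vec{H}^\intercal+\vecsym{\Gamma}^{-1})^{-1}$, which drops out of the same computation. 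Collecting the $\vecsym{\lambda}$--free terms recovers the classical Kalman update $\vec{v}^{j,n+1}=\hat{\vec{v}}^{j,n+1}+\vec{C}^{n+1}\vec{H}^\intercal(\vec{H}\vec{C}^{n+1}\vec{H}^\intercal+\vecsym{\Gamma}^{-1})^{-1}(\vec{y}^{n+1}-\vec{H}\hat{\vec{v}}^{j,n+1})$, while the multiplier part contributes an extra term proportional to $\vec{J}_{\tilde{\mathcal{A}}}^\intercal(\vec{v}^{j,n+1})\vecsym{\lambda}^{j,n+1}$.

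Finally I would apply $\vec{H}^\perp$ to project onto the control space. Using the block form~\eqref{eq:covMatrix} and the fact that $\vec{J}_{\tilde{\mathcal{A}}}^\intercal$ has $\vec{J}_{\mathcal{A}}^\intercal$ in the $\vec{u}$--block and zero in the $\vec{w}$--block, one reads off the reductions $\vec{H}^\perp\hat{\vec{v}}^{j,n+1}=\vec{u}^{j,n}$, $\vec{H}\hat{\vec{v}}^{j,n+1}=\G(\vec{u}^{j,n})$, $\vec{H}^\perp\vec{C}^{n+1}\vec{H}^\intercal=\vec{C}^{n+1}_{\vec{u}\vec{w}}$, $\vec{H}\vec{C}^{n+1}\vec{H}^\intercal=\vec{C}^{n+1}_{\vec{w}\vec{w}}$, $\vec{H}^\perp\vec{C}^{n+1}\vec{J}_{\tilde{\mathcal{A}}}^\intercal=\vec{C}^{n+1}_{\vec{u}\vec{u}}\vec{J}_{\mathcal{A}}^\intercal$ and $\vec{H}\vec{C}^{n+1}\vec{J}_{\tilde{\mathcal{A}}}^\intercal=(\vec{C}^{n+1}_{\vec{u}\vec{w}})^\intercal\vec{J}_{\mathcal{A}}^\intercal$. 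Substituting these collapses the projected equation to exactly the first line of~\eqref{eq:updateControl}. The main obstacle is purely bookkeeping---keeping block indices, transposes and the sign of the multiplier term straight through the Woodbury inversion---so once the Lagrangian is written down, the derivation reduces to the classical EnKF derivation augmented by one extra constraint term.
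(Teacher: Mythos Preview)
Your proposal is correct and follows essentially the same route as the paper: form the Lagrangian, write the stationarity condition, invert via the Woodbury identity together with the auxiliary relation $(\vec{H}^\intercal\vecsym{\Gamma}\vec{H}+(\vec{C}^{n+1})^{-1})^{-1}\vec{H}^\intercal\vecsym{\Gamma}=\vec{C}^{n+1}\vec{H}^\intercal(\vec{H}\vec{C}^{n+1}\vec{H}^\intercal+\vecsym{\Gamma}^{-1})^{-1}$, then project by $\vec{H}^\perp$ and read off the block identities. The paper's proof carries out exactly these steps in the same order with the same simplifications.
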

\begin{proof}	
We apply Lagrange multiplier technique to the objective $\mathcal{J}_\varepsilon^{j,n}(\vec{v})$ defined by~\eqref{eq:compromiseEquivalent} with shifting of the empirical covariance $\vec{C}^{n+1} \mapsto \vec{C}_\varepsilon^{n+1}:=\vec{C}^{n+1}+\varepsilon\vec{I}$, where $\vec{I}\in\R^{(d+k)\times(d+K)}$ is the identity matrix and $\varepsilon>0$. Then we study the limit $\varepsilon \to 0$. We observe that, thanks to the shifted matrix, $\vec{H}^\intercal \vecsym{\Gamma} \vec{H} + \vec{C}_\varepsilon^{{n+1}^{-1}}$ is strictly positive definite. Consequently, the objective $\mathcal{J}_\varepsilon^{j,n}$ is strongly convex and there is existence and uniqueness of the solution to the minimization problem.

Detailed computations required in the following proof are given in Appendix~\ref{app:proof}.

The constrained minimum of $\mathcal{J}_\varepsilon^{j,n}(\vec{v})$ corresponds to a stationary point of the Lagrangian function
$$
\mathfrak{L}(\vec{v},\vecsym{\lambda}) = \mathcal{J}_\varepsilon^{j,n}(\vec{v}) + \vecsym{\lambda}^\intercal \tilde{\mathcal{A}}(\vec{v})
$$
where the vector $\vecsym{\lambda} \in \R^m$ contains the Lagrange multipliers.
Variations of the Lagrangian function $\mathfrak{L}$ with respect to $\vec{v}$ and $\vecsym{\lambda}$ are
\begin{align*}
\nabla_{\vec{v}} \mathfrak{L}(\vec{v},\vecsym{\lambda}) &=  (\vec{H}^\intercal \vecsym{\Gamma} \vec{H} + \vec{C}_\varepsilon^{{n+1}^{-1}}) \vec{v} - (\vec{H}^\intercal \vecsym{\Gamma}^\intercal \vec{y}^{n+1} + \vec{C}_\varepsilon^{{n+1}^{-1}} \hat{\vec{v}}^{j,n+1}) + (\vecsym{\lambda}^\intercal \vec{J}_{\tilde{\mathcal{A}}}(\vec{v}))^\intercal \\
\nabla_{\vecsym{\lambda}} \mathfrak{L}(\vec{v},\vecsym{\lambda}) &= \tilde{\mathcal{A}}(\vec{v}).
\end{align*}
If $(\vec{v}^{j,n+1},\vecsym{\lambda}^{j,n+1})\in\R^{d+K}\times\R^m$ is an optimal solution then it satisfies $\nabla_{\vec{v}} \mathfrak{L}(\vec{v}^{j,n+1},\vecsym{\lambda}^{j,n+1}) = \vec{0}_{\R^{d+K}}$ and $\nabla_{\vecsym{\lambda}} \mathfrak{L}(\vec{v}^{j,n+1},\vecsym{\lambda}^{j,n+1}) = \vec{0}_{\R^m}$, that are the first order necessary optimality conditions.

Due to the assumption of a regular point, the system of optimality conditions is determined. In particular, imposing $\nabla_{\vec{v}} \mathfrak{L}(\vec{v}^{j,n+1},\vecsym{\lambda}^{j,n+1}) = \vec{0}_{\R^{d+K}}$, it is possible to determine
\begin{equation} \label{eq:vSol}
\vec{v}^{j,n+1} = (\vec{H}^\intercal \vecsym{\Gamma} \vec{H} + \vec{C}_\varepsilon^{{n+1}^{-1}})^{-1} (\vec{H}^\intercal \vecsym{\Gamma} \vec{y}^{n+1} + \vec{C}_\varepsilon^{{n+1}^{-1}} \hat{\vec{v}}^{j,n+1}) - (\vec{H}^\intercal \vecsym{\Gamma} \vec{H} + \vec{C}_\varepsilon^{{n+1}^{-1}})^{-1} \vec{J}_{\tilde{\mathcal{A}}}^\intercal(\vec{v}^{j,n+1}) \vecsym{\lambda}^{j,n+1}.
\end{equation}
Now we apply the Woodbury matrix identity to the term multiplying $\hat{\vec{v}}^{j,n+1}$ and to the last term in~\eqref{eq:vSol}, cf.~Appendix~\ref{app:proof}.
Instead, for the term multiplying $\vec{y}^{n+1}$ in~\eqref{eq:vSol} we observe that
$$
(\vec{H}^\intercal \vecsym{\Gamma} \vec{H} + \vec{C}_\varepsilon^{{n+1}^{-1}})^{-1} \vec{H}^\intercal \vecsym{\Gamma} = \vec{C}_\varepsilon^{n+1} \vec{H}^\intercal (\vec{H} \vec{C}_\varepsilon^{n+1} \vec{H}^\intercal + \vecsym{\Gamma}^{-1})^{-1}.
$$
Then we get, cf.~Appendix~\ref{app:proof},
\begin{equation} \label{eq:vSol2}
\begin{aligned}
\vec{v}^{j,n+1} =& \hat{\vec{v}}^{j,n+1} + \vec{C}_\varepsilon^{n+1} \vec{H}^\intercal (\vec{H} \vec{C}_\varepsilon^{n+1} \vec{H}^\intercal + \vecsym{\Gamma}^{-1})^{-1} (\vec{y}^{n+1} - \vec{H}\hat{\vec{v}}^{j,n+1}) \\
&- (\vec{C}_\varepsilon^{n+1} - \vec{C}_\varepsilon^{n+1} \vec{H}^\intercal (\vec{H} \vec{C}_\varepsilon^{n+1} \vec{H}^\intercal + \vecsym{\Gamma}^{-1})^{-1} \vec{H} \vec{C}_\varepsilon^{n+1} ) \vec{J}_{\tilde{\mathcal{A}}}^\intercal(\vec{v}^{j,n+1}) \vecsym{\lambda}^{j,n+1}. 
\end{aligned}
\end{equation}
If we multiply the previous equation by the observation matrix $\vec{H}^\perp$ and compute the limit $\varepsilon\to 0$, we obtain the update formula for the control, cf.~Appendix~\ref{app:proof}:
\begin{align*}
\vec{u}^{j,n+1} =& \vec{u}^{j,n} + \vec{C}^{n+1}_{\vec{u}\vec{w}} ( \vec{C}^{n+1}_{\vec{w}\vec{w}} + \vecsym{\Gamma}^{-1} )^{-1} (\vec{y}^{n+1} - \G(\vec{u}^{j,n}) )\\
&+ \vec{C}^{n+1}_{\vec{u}\vec{w}} ( \vec{C}^{n+1}_{\vec{w}\vec{w}} + \vecsym{\Gamma}^{-1} )^{-1} \vec{C}^{{n+1}^\intercal}_{\vec{u}\vec{w}} \vec{J}_\mathcal{A}^\intercal(\vec{u}^{j,n+1}) \vecsym{\lambda}^{j,n+1} - \vec{C}^{n+1}_{\vec{u}\vec{u}} \vec{J}_\mathcal{A}^\intercal(\vec{u}^{j,n+1}) \vecsym{\lambda}^{j,n+1},
\end{align*}
coupled to $\mathcal{A}(\vec{u}^{j,n+1})=\vec{0}_{\R^m}$, which provide a set of necessary optimality conditions for the constrained optimization problem~\eqref{eq:constrainedProblem}.
\end{proof}

Observe that control update~\eqref{eq:updateControl} is the classical ensemble Kalman filter update formula, given by the first two terms in the right hand side, perturbed by the last two terms due to the constraint~\eqref{eq:constraint}. Moreover, \eqref{eq:updateControl} depends also on the multipliers and their values must be determined as part of the solution.

\begin{corollary} \label{th:convex}
	Consider the assumptions of Proposition~\ref{th:optimality}. If the feasible set is convex, then the optimization problem~\eqref{eq:constrainedProblem} is convex, and this implies that the necessary optimality conditions~\eqref{eq:updateControl} are also sufficient. In particular, this holds true for affine equality constraints.
\end{corollary}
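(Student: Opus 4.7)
The plan is to verify three ingredients in turn: convexity of the objective functional $\mathcal{J}^{j,n}$, the classical fact that for a convex program satisfying LICQ the first-order Lagrange conditions are also sufficient for global optimality, and convexity of the feasible set in the affine case. The whole argument is essentially a repackaging of standard convex analysis, and no new ideas beyond those already used in the proof of Proposition~\ref{th:optimality} are required.

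First I would check convexity of $\mathcal{J}^{j,n}$ in $\vec{v}$. Reading off the quadratic form~\eqref{eq:compromiseEquivalent}, its Hessian equals $\vec{H}^\intercal \vecsym{\Gamma} \vec{H} + \vec{C}^{{n+1}^{-1}}$. Since $\vecsym{\Gamma}^{-1}$ is positive definite by standing assumption, so is $\vecsym{\Gamma}$, and hence $\vec{H}^\intercal \vecsym{\Gamma} \vec{H}$ is positive semidefinite; combined with the positive definiteness of $\vec{C}^{{n+1}^{-1}}$ this yields a positive definite Hessian, so $\mathcal{J}^{j,n}$ is a strictly convex quadratic. Equivalently, this is immediate from the original form~\eqref{eq:compromise}, since each summand is a squared Mahalanobis norm, which is convex.

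Next I would invoke the standard textbook result in constrained convex optimization: when the objective is convex, the feasible set is convex, and a constraint qualification such as LICQ holds at a feasible stationary point, any solution of the first-order Lagrange conditions is a global minimizer. Applied to~\eqref{eq:constrainedProblem} under the hypothesis that the feasible set is convex, this upgrades the necessary optimality conditions~\eqref{eq:updateControl} of Proposition~\ref{th:optimality} to sufficient ones, and the conclusion of the corollary in the general convex case follows.

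For the affine specialization, I would write $\mathcal{A}(\vec{u}) = \vec{M}\vec{u}-\vec{b}$ with $\vec{M}\in\R^{m\times d}$ and $\vec{b}\in\R^m$. The Jacobian $\vec{J}_\mathcal{A}=\vec{M}$ is then constant, so LICQ reduces to the requirement that $\vec{M}$ has full row rank $m$. The feasible set $\{\vec{u}\in\R^d : \vec{M}\vec{u}=\vec{b}\}$ is a nonempty affine subspace, hence convex, so the two previous steps apply verbatim. No serious obstacle is expected; the only mild technical point to flag is that $\vec{C}^{n+1}$ must genuinely be invertible for the Hessian to be positive definite rather than only positive semidefinite, but this is already tacit in the derivation of~\eqref{eq:compromiseEquivalent} and should simply be cited rather than re-proved.
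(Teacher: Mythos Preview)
The paper does not supply an explicit proof of this corollary; it treats the statement as an immediate consequence of the convexity of $\mathcal{J}^{j,n}$ (noted just before Proposition~\ref{th:optimality} and again in the game-theory subsection via positive semidefiniteness of $\vec{H}^\intercal\vecsym{\Gamma}\vec{H}+\vec{C}^{{n+1}^{-1}}$) together with standard convex-optimization facts. Your proposal correctly spells out precisely this implicit argument---convex objective plus convex feasible set plus LICQ makes the first-order conditions sufficient, and affine constraints yield a convex (affine) feasible set---so it is correct and matches the paper's reasoning.
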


\subsubsection{A Game Theory viewpoint and existence results}
	
	Proposition~\ref{th:optimality} is stated for a fixed ensemble member $j$ at a fixed iteration $n$, by assuming that the other ensembles play a role of parameters in the optimization problem. This is closely related to the fact that
	problem~\eqref{eq:constrainedProblem} can be seen from a game theoretic point of view. In fact, if we interpret each ensemble member $j \in \{ 1,\dots,J \}$ as a player which chooses a control variable $\vec{v}^{j,n+1}$ in its set of feasible controls given by the constraint $\tilde{\mathcal{A}}(\vec{v}^{j,n+1})=\vec{0}$ while seeking to minimize its payoff function $\mathcal{J}^{j,n}$ which depends, via the covariance matrix $\vec{C}^{n+1}$, on all other players' controls at iteration $n$, i.e.~$\vec{v}^{-j,n}$, cf.~Remark~\ref{rm:indexCNotation}. Here, we indicate $\vec{v}^{-j,n+1} = \{ \vec{v}^{k,n+1} : k=1,\dots,J, \ k\neq j \}$. Observe that each player has no knowledge on the strategy adopted by other players. Moreover, we notice that we are in presence of a repeated game.
	
	In a non--cooperative and simultaneous game, we recall that $\{ \vec{v}^{j,n+1} : j=1,\dots,J \}$ represents a Nash equilibrium if for every $\vec{v}$ satisfying the constraint one has $\mathcal{J}^{j,n}(\vec{v},\vec{v}^{-j,n+1}) \geq \mathcal{J}^{j,n}(\vec{v}^{j,n+1},\vec{v}^{-j,n+1})$, for $j=1,\dots,J$.
	
	In general, a Nash equilibrium may not exist and need not to be unique. Conditions for the existence of Nash equilibrium are recalled in the following theorem.
	\begin{theorem}[Nikaid\^{o} and Isoda~\cite{NikaidoIsoda1955}]
		Let $\Gamma = \{ X_i,f_i \}_{i=1}^N$ be a game with nonempty, compact and convex strategy sets $X_i \subset \R^{n_i}$ and continuous payoff functions $f_i : X_i \to \R$, which are quasiconvex in $x^i$ for every fixed $x^{-i}$ for all $i=1,\dots,N$. Then $\Gamma$ has (at least) one Nash equilibrium.
	\end{theorem}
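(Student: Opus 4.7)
The plan is to prove this classical existence result via Kakutani's fixed point theorem applied to the joint best-response correspondence. First I would form the joint strategy set $X = X_1 \times \cdots \times X_N \subset \R^{n_1 + \cdots + n_N}$, which is nonempty, compact, and convex as a finite product of sets with those properties. For each player $i$, writing $x^{-i}$ for the tuple of other players' strategies, I define the best-response correspondence
$$B_i(x^{-i}) = \arg\min_{x^i \in X_i} f_i(x^i, x^{-i}),$$
and set $B(x) = B_1(x^{-1}) \times \cdots \times B_N(x^{-N})$. A Nash equilibrium is precisely a fixed point $x^* \in B(x^*)$, so it suffices to verify the hypotheses of Kakutani's theorem for the set-valued map $B : X \rightrightarrows X$: that $B(x)$ is nonempty and convex for every $x$, and that $B$ has closed graph.

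Next I would verify each property in turn. Nonemptiness of $B_i(x^{-i})$ follows from continuity of $f_i$ and compactness of $X_i$ via the Weierstrass extreme value theorem. Convexity of $B_i(x^{-i})$ is the step where the quasiconvexity hypothesis is essential: the minimizer set of a quasiconvex function on a convex set coincides with its sublevel set at the minimum value, and such sublevel sets are convex by definition of quasiconvexity. Closed-graphness of the correspondence $B_i$ (equivalently, upper hemicontinuity with compact values) follows from Berge's maximum theorem, using that the feasible set $X_i$ is fixed and compact, independent of $x^{-i}$, together with joint continuity of $f_i$. Finite products preserve nonemptiness, convexity, and the closed-graph property, so $B$ itself satisfies all hypotheses of Kakutani's theorem.

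Applying Kakutani then yields a point $x^* \in X$ with $x^* \in B(x^*)$. Unwinding component by component, this says that for every $i$, the coordinate $x^{*,i}$ minimizes $f_i(\cdot, x^{*,-i})$ over $X_i$, i.e.~$f_i(x, x^{*,-i}) \geq f_i(x^{*,i}, x^{*,-i})$ for all $x \in X_i$, which is exactly the Nash equilibrium condition in the paper's minimization convention.

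The main obstacle is establishing the closed-graph property of $B$. If one wishes to avoid invoking Berge directly, the argument proceeds by taking sequences $x_k \to x$ in $X$ and $y_k \in B(x_k)$ with $y_k \to y$, and showing $y \in B(x)$: for each fixed $z \in X_i$ one has $f_i(y_k^i, x_k^{-i}) \leq f_i(z, x_k^{-i})$, and passing to the limit using joint continuity of $f_i$ gives $f_i(y^i, x^{-i}) \leq f_i(z, x^{-i})$, as required. The other two conditions are essentially immediate from the hypotheses, with quasiconvexity being used exactly once, for convex-valuedness.
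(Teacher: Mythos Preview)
Your argument is correct and is the standard proof of the Nikaid\^{o}--Isoda existence theorem via Kakutani's fixed point theorem. Note, however, that the paper does not supply its own proof of this statement: the theorem is quoted as a classical result from~\cite{NikaidoIsoda1955} and used as a black box to conclude that problem~\eqref{eq:constrainedProblem} admits a Nash equilibrium when the feasible set is convex. There is therefore no ``paper's proof'' to compare against; your write-up simply fills in what the paper takes for granted.

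One small remark on the statement as written (both in the paper and implicitly in your proof): the payoff functions should have domain $X = \prod_i X_i$ rather than $X_i$, since each $f_i$ depends on the full strategy profile $(x^i,x^{-i})$. You use this correctly in the body of the argument, so this is only a notational quibble inherited from the paper's formulation.
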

	We recall that quasiconvexity is implied by convexity. Then the Nikaid\^{o} and Isoda Theorem is satisfied for problem~\eqref{eq:constrainedProblem} if the feasible set is convex.
	In fact, the payoff function~\eqref{eq:compromiseEquivalent} is convex since $\vec{H}^\intercal \vec{\Gamma} \vec{H} + \vec{C}^{{n+1}^{-1}}$ is positive semidefinite, and consequently $\mathcal{J}^{j,n}$ is quasiconvex for each $j=1,\dots,J$. In particular, there exists at least one equilibria for affine equality constraints.
	For further references, we refer to~\cite{BressanNotes,FacchineiPang,SchwartzNotes}.

\subsection{The case of linear equality constraints} \label{sec:linearEqConstr}

Let us assume that the constraint $\mathcal{A}$ on the control space is linear. Then we can write $\mathcal{A}(\vec{u}) = \vec{A} \vec{u}$, with $\vec{A} \in \R^{m\times d}$. In this case we need to assume that $\vec{A} \in \R^{m \times d}$ is a full row rank matrix in order to have satisfied the condition of regular point. The linearity of the equality constraint guarantees an expression to evaluate the multipliers explicitly by substituting the update formula given in~\eqref{eq:updateControl} into the constraint~\eqref{eq:constraint}.
The following result holds true in the case of linear equality constraints.

\begin{proposition} \label{th:linearConstraint}
	Let $\mathcal{A}:\R^d\to\R^m$ be a linear operator and $\{ \vec{u}^{j,0} \}_{j=1}^J$ be a set of initial ensembles such that $\mathcal{A}(\vec{u}^{j,0}) = \vec{0}_{\R^m}$. Let $\{ \vec{u}^{j,n+1} \}_{j=1}^J$ be the ensemble set computed via the unconstrained ensemble Kalman filter update. Then $\mathcal{A}(\vec{u}^{j,n+1}) = \vec{0}_{\R^m}$, $\forall\,j=1,\dots,J$.
\end{proposition}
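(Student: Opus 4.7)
The plan is to prove this by induction on the iteration index $n$, exploiting the fact that $\vec{A}$ is a linear map and that the EnKF update direction lies in the span of the ensemble deviations from the mean.

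For the base case, the hypothesis directly provides $\vec{A}\vec{u}^{j,0}=\vec{0}_{\R^m}$ for every $j$. For the inductive step, assume that $\vec{A}\vec{u}^{k,n}=\vec{0}_{\R^m}$ for all $k=1,\dots,J$ at some step $n$. By linearity of $\vec{A}$ and the definition of $\overline{\vec{u}}^n$, this immediately gives $\vec{A}\overline{\vec{u}}^n = \vec{0}_{\R^m}$, and hence $\vec{A}(\vec{u}^{k,n} - \overline{\vec{u}}^n) = \vec{0}_{\R^m}$ for every $k$.

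The core observation is that each outer product appearing in $\vec{C}^{n+1}_{\vec{u}\vec{w}}$ can be written as $(\vec{u}^{k,n} - \overline{\vec{u}}^n)(\G(\vec{u}^{k,n})-\overline{\G}^n)^\intercal$, so that
\[
\vec{A}\vec{C}^{n+1}_{\vec{u}\vec{w}} = \frac{1}{J}\sum_{k=1}^J \vec{A}(\vec{u}^{k,n}-\overline{\vec{u}}^n)(\G(\vec{u}^{k,n})-\overline{\G}^n)^\intercal = \vec{0}\in\R^{m\times K}.
\]
Since, without the constraint, Proposition~\ref{th:optimality} reduces (via $\vecsym{\lambda}^{j,n+1} = \vec{0}$) to the classical unconstrained EnKF update
\[
\vec{u}^{j,n+1} = \vec{u}^{j,n} + \vec{C}^{n+1}_{\vec{u}\vec{w}}(\vec{C}^{n+1}_{\vec{w}\vec{w}}+\vecsym{\Gamma}^{-1})^{-1}(\vec{y}^{n+1}-\G(\vec{u}^{j,n})),
\]
applying $\vec{A}$ to both sides and using the induction hypothesis for the first term and the identity $\vec{A}\vec{C}^{n+1}_{\vec{u}\vec{w}}=\vec{0}$ for the correction term yields $\vec{A}\vec{u}^{j,n+1}=\vec{0}_{\R^m}$ for all $j$, closing the induction.

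There is no real obstacle here: once one notices that each rank--one summand in $\vec{C}^{n+1}_{\vec{u}\vec{w}}$ carries $\vec{u}^{k,n}-\overline{\vec{u}}^n$ as its left factor, the argument is mechanical. The only point to be slightly careful about is that the statement must be read as an inductive claim ranging over all $n\geq 0$, because the invariance of the constraint at step $n+1$ relies on its validity for \emph{all} ensemble members at step $n$, not just the single index $j$ being updated. It is also worth remarking that the argument actually holds for any affine constraint $\mathcal{A}(\vec{u}) = \vec{A}\vec{u}-\vec{b}$, provided the initial ensembles are feasible, since $\vec{A}(\vec{u}^{k,n}-\overline{\vec{u}}^n)$ still vanishes irrespective of~$\vec{b}$.
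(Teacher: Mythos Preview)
Your proof is correct and follows essentially the same inductive argument as the paper: both use that $\vec{A}(\vec{u}^{k,n}-\overline{\vec{u}}^n)=\vec{0}_{\R^m}$ under the induction hypothesis to conclude $\vec{A}\vec{C}^{n+1}_{\vec{u}\vec{w}}=\vec{0}$, and then apply $\vec{A}$ to the unconstrained update. The paper additionally remarks that $\vec{C}^{n+1}_{\vec{u}\vec{u}}\vec{A}^\intercal=\vec{0}$ and $\vec{C}^{{n+1}^\intercal}_{\vec{u}\vec{w}}\vec{A}^\intercal=\vec{0}$ so that the constrained update~\eqref{eq:updateControl} collapses to the unconstrained one, but this is not needed for the proposition as stated and your more direct route is entirely adequate.
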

\begin{proof}
	We prove the statement by induction on $n$. Notice that $\vec{J}_\mathcal{A} = \vec{A}^\intercal$ since the constraint is linear. It is easy to check that if $\{ \vec{u}^{j,n} \}_{j=1}^J$ satisfy the constraint then $\vec{C}_{\vec{u}\vec{w}}^{{n+1}^\intercal} \vec{A}^\intercal = \vec{0}_{\R^{K\times m}}$ and $\vec{C}_{\vec{u}\vec{u}}^{{n+1}^\intercal} \vec{A}^\intercal = \vec{0}_{\R^{d\times m}}$. Then the update formula given in~\eqref{eq:updateControl} reduces to
	$$
	\vec{u}^{j,n+1} = \vec{u}^{j,n} + \vec{C}^{n+1}_{\vec{u}\vec{w}} ( \vec{C}^{n+1}_{\vec{w}\vec{w}} + \vecsym{\Gamma}^{-1})^{-1} (\vec{y}^{n+1} - \G(\vec{u}^{j,n}))
	$$
	which represents the unconstrained ensemble Kalman filter formula.
	Substituting into the constraint we have $\vec{A} \vec{u}^{j,n+1} = \vec{A} \vec{u}^{j,n} = \vec{0}_{\R^m}$ since also $\vec{A} \vec{C}_{\vec{u}\vec{w}}^{n+1} = \vec{0}_{\R^{m\times K}}$.
\end{proof}

We point--out that the previous result is a direct consequence of the well--known subspace
property of the classical unconstrained EnKF formula, which states that the ensemble iterates are a linear combination of the initial ensemble members, namely they stay in the subspace spanned by the initial ensemble~\cite{iglesiaslawstuart2013}.

We provide an experimental evidence of the result in Proposition~\ref{th:linearConstraint}. We consider the inverse problem of finding the force function of a linear elliptic equation in one spatial dimension assuming that noisy observation of the solution to the problem are available. This is a standard problem in the mathematical literature on the EnKF for inverse problems, e.g.~see~\cite{Stuart2019CEnKF,iglesiaslawstuart2013,schillingsstuart2017}.

The problem is prescribed by the following one dimensional elliptic PDE
\begin{equation} \label{eq:ellipticEq}
-\frac{\mathrm{d}^2}{\mathrm{d}x^2} p(x) + p(x) = u(x), \quad x\in[0,\pi]
\end{equation}
endowed with boundary conditions $p(0) = p(\pi) = 0$. We assign a continuous exact control $u(x)$, being symmetric with respect to $x=\frac{\pi}{2}$, namely $u(x) = u(-x+\pi)$. Introducing a uniform mesh consisting of $d=K=2^8$ equidistant points on the interval $[0,\pi]$, we let $\vec{u}^\dagger\in\R^d$ be the vector of the evaluations of the control function $u(x)$ on the mesh. Noisy observations $\vec{y}\in\R^K$ are simulated as
$$
\vec{y} = \vec{p} + \vecsym{\eta} = \vec{G} \vec{u}^\dagger + \vecsym{\eta},
$$
where $\vec{G}\in\R^{K\times d}$ is the finite difference discretization of the continuous linear operator defining the elliptic PDE~\eqref{eq:ellipticEq}. For simplicity we assume that $\vecsym{\eta}$ is a Gaussian white noise, more precisely $\vecsym{\eta}\sim \mathcal{N}(0,\gamma^2 \vec{I})$ with $\gamma \in \R^+$ and $\vec{I} \in \R^{d\times d}$ is the identity matrix. We are interested in recovering the control $\vec{u}^\dagger \in \R^d$ from the noisy observations $\vec{y}\in\R^K$ only.

\begin{figure}[!t]
	\centering
	\includegraphics[width=\textwidth]{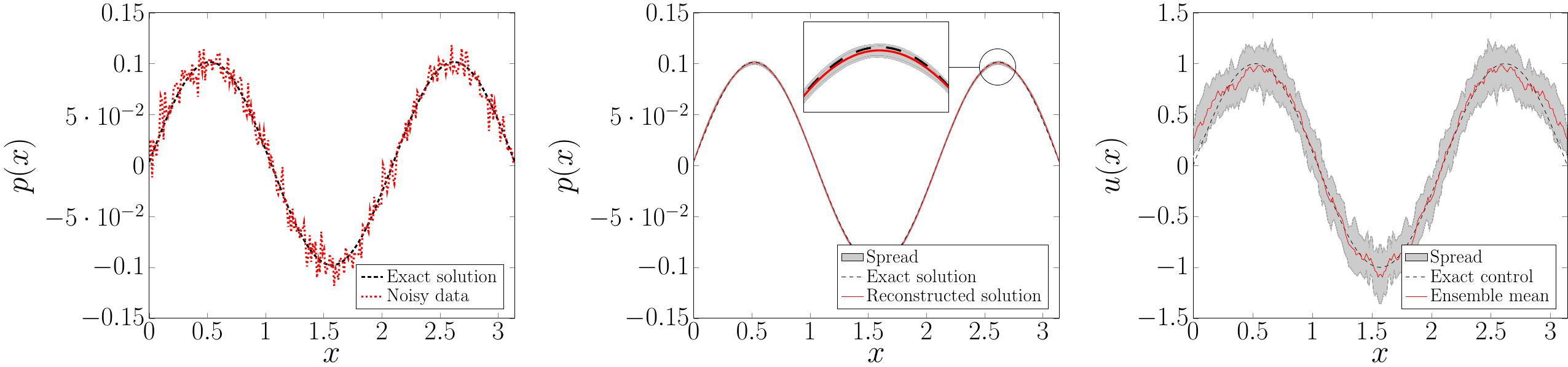}
	\caption{Application of the unconstrained ensemble Kalman filter based on the discrete formulation, assuming that the control function satisfies a linear equality constraint given as symmetry with respect to $x=\frac{\pi}{2}$.\label{fig:linearCEnKF}}
\end{figure}

Let us consider $u(x) = \sin(3x)$, $\forall\,x\in[0,\pi]$. In Figure~\ref{fig:linearCEnKF} we show the solution to this problem provided by the unconstrained ensemble Kalman filter. The initial ensemble set $\{\vec{u}^{j,0}\}_{j=1}^J$ is artificially built in order to satisfy symmetry with respect to $x=\frac{\pi}{2}$, after being sampled from a Brownian bridge, e.g.~as in~\cite{schillingsstuart2017}. This implies that $m=\frac{d}{2}=2^7$ constraints are taken into account. We solve the inverse problem by updating the ensemble members with
$$
	\vec{u}^{j,n+1} = \vec{u}^{j,n} + \vec{C}^{n+1}_{\vec{u}\vec{w}} ( \vec{C}^{n+1}_{\vec{w}\vec{w}} + \vecsym{\Gamma}^{-1})^{-1} (\vec{y} - G(\vec{u}^{j,n}))
$$
for $J=100$ and a noise level $\gamma=0.01$. The filter method converges, meeting the discrepancy principle in very few iterations, and allowing each ensemble member and the corresponding mean to satisfy the constraint.

Proposition~\ref{th:linearConstraint} guarantees that the constrained optimization problem~\eqref{eq:constrainedProblem} is solved by the unconstrained ensemble Kalman filter when the constraint $\mathcal{A}$ is linear, independently on the linearity of the model $\G$. As consequence, analysis and continuous limits of the ensemble Kalman filter with linear equality constraints coincide with the recent results~\cite{SchillingsPreprint,Stuart2019MFEnKF,HertyVisconti2019,schillingsstuart2017,schillingsstuart2018} for unconstrained problems, and in the following we will focus on nonlinear constraints only.

\section{Continuous limits of the constrained ensemble Kalman filter} \label{sec:continuousLimits}

In this section we investigate continuous limits of the constrained version of the ensemble Kalman filter, when nonlinear equality constraints are taken into account. These limits allow us to reformulate the fully discrete filter into the framework of ordinary and partial differential equations, thus making the method computationally simpler and amenable to an analysis of its properties.

\subsection{Continuous time limit} \label{sec:continuousTime}

In order to compute the continuous time limit equation, we make the following assumptions, see e.g.~\cite{HertyVisconti2019,schillingsstuart2017}.
\begin{scaling}
	The covariance matrix $\vecsym{\Gamma}^{-1}$, accounting for uncertainties due to data, is scaled by a scalar parameter $\Delta t$.
\end{scaling}
\begin{scaling}
	The multipliers $\vecsym{\lambda}^{j,n+1}$ are scaled as $\Delta t\vecsym{\lambda}^{j,n+1}$.
\end{scaling}
We notice that the second assumption is in fact not restrictive since multipliers are unique up to a multiplicative constant.

\begin{proposition}
	Under the scaling assumptions 1 and 2, the constrained ensemble Kalman filter~\eqref{eq:updateControl} formally converges to the semi--explicit system of differential algebraic equations (DAEs)
	\begin{equation} \label{eq:limitDAE}
	\begin{aligned}
	\frac{\mathrm{d}}{\mathrm{d} t} \vec{u}^j &= \vec{C}_{\vec{u}\vec{w}} \vecsym{\Gamma} (\vec{y} - \G(\vec{u}^j)) - \vec{C}_{\vec{u}\vec{u}} \vec{J}_\mathcal{A}^\intercal(\vec{u}^j) \vecsym{\lambda}^j \\
	\vec{0}_{\R^m} &= \mathcal{A}(\vec{u}^j(t)),
	\end{aligned}
	\end{equation}
	in the limit $\Delta t \to 0^+$.
\end{proposition}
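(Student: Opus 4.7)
The plan is to perform a formal Taylor expansion of the update formula~\eqref{eq:updateControl} in the small parameter $\Delta t$, using the two scaling assumptions, and then identify the leading-order terms to recover the drift in~\eqref{eq:limitDAE}. First I would rewrite~\eqref{eq:updateControl} by substituting $\vecsym{\Gamma}^{-1} \leftarrow \frac{1}{\Delta t}\vecsym{\Gamma}^{-1}$ and $\vecsym{\lambda}^{j,n+1} \leftarrow \Delta t \, \vecsym{\lambda}^{j,n+1}$, then bring $\vec{u}^{j,n}$ to the left and divide both sides by $\Delta t$, so that the left-hand side becomes a finite difference approximation of $\frac{\mathrm{d}}{\mathrm{d}t}\vec{u}^{j}$ at $t = n\Delta t$.

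The core algebraic step is the identity
\begin{equation*}
\left(\vec{C}^{n+1}_{\vec{w}\vec{w}} + \tfrac{1}{\Delta t}\vecsym{\Gamma}^{-1}\right)^{-1} = \Delta t \left(\Delta t \, \vec{C}^{n+1}_{\vec{w}\vec{w}} + \vecsym{\Gamma}^{-1}\right)^{-1} = \Delta t \, \vecsym{\Gamma} + O(\Delta t^{2}),
\end{equation*}
which follows from a Neumann-type expansion around $\vecsym{\Gamma}^{-1}$, legitimate since $\vecsym{\Gamma}^{-1}$ is assumed positive definite. Plugging this into the three terms on the right-hand side of~\eqref{eq:updateControl}, one finds that the innovation term contributes $\Delta t \, \vec{C}^{n+1}_{\vec{u}\vec{w}}\vecsym{\Gamma}(\vec{y}^{n+1}-\G(\vec{u}^{j,n})) + O(\Delta t^{2})$, the mixed constraint term carries an extra factor $\Delta t$ from the rescaled multiplier and is therefore of order $\Delta t^{2}$, while the last term becomes $-\Delta t \, \vec{C}^{n+1}_{\vec{u}\vec{u}}\vec{J}_{\mathcal{A}}^{\intercal}(\vec{u}^{j,n+1})\vecsym{\lambda}^{j,n+1}$ exactly.

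Dividing by $\Delta t$ and formally sending $\Delta t\to 0^{+}$ (with $n\Delta t \to t$ and $\vec{u}^{j,n}\to \vec{u}^{j}(t)$) eliminates the $O(\Delta t^{2})$ remainder together with the mixed Woodbury-type contribution, leaving precisely
\begin{equation*}
\frac{\mathrm{d}}{\mathrm{d}t}\vec{u}^{j} = \vec{C}_{\vec{u}\vec{w}}\vecsym{\Gamma}(\vec{y}-\G(\vec{u}^{j})) - \vec{C}_{\vec{u}\vec{u}}\vec{J}_{\mathcal{A}}^{\intercal}(\vec{u}^{j})\vecsym{\lambda}^{j},
\end{equation*}
which is the ODE part of~\eqref{eq:limitDAE}. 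The algebraic constraint $\mathcal{A}(\vec{u}^{j}(t))=\vec{0}$ is obtained directly from the second line of~\eqref{eq:updateControl}, which holds for every $n$ and is preserved pointwise in the limit by continuity of $\mathcal{A}$.

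The step I expect to need the most care is the expansion of the matrix inverse, because one has to check that the dependence of $\vec{J}_{\mathcal{A}}$ on $\vec{u}^{j,n+1}$ (rather than $\vec{u}^{j,n}$) in the discrete formula does not spoil the limit; this is fine since the difference $\vec{u}^{j,n+1}-\vec{u}^{j,n}$ is itself $O(\Delta t)$, so $\vec{J}_{\mathcal{A}}(\vec{u}^{j,n+1}) = \vec{J}_{\mathcal{A}}(\vec{u}^{j,n}) + O(\Delta t)$ by differentiability of $\mathcal{A}$, and the induced correction is absorbed into the higher-order remainder. Since the limit is only formal, no stronger estimates on $\vecsym{\lambda}^{j,n+1}$ are required beyond its $\Delta t$--scaling.
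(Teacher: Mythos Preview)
Your proposal is correct and follows essentially the same approach as the paper: rescale, factor out $\Delta t$ via the identity $(\vec{C}^{n+1}_{\vec{w}\vec{w}}+\tfrac{1}{\Delta t}\vecsym{\Gamma}^{-1})^{-1}=\Delta t(\Delta t\,\vec{C}^{n+1}_{\vec{w}\vec{w}}+\vecsym{\Gamma}^{-1})^{-1}$, identify the mixed constraint term as the $O(\Delta t^{2})$ remainder, and pass to the limit with the constraint carried along. The paper's argument is identical in structure, though it leaves the Neumann expansion and the $\vec{J}_{\mathcal{A}}(\vec{u}^{j,n+1})$ versus $\vec{J}_{\mathcal{A}}(\vec{u}^{j,n})$ point implicit.
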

\begin{proof}
Using the scaling assumptions, the solution~\eqref{eq:updateControl} to the constrained optimization problem can be reformulated as
\begin{equation} \label{eq:updateControlScaled}
\begin{aligned}
\vec{u}^{j,n+1} 
=& \vec{u}^{j,n} + \Delta t \vec{C}^{n+1}_{\vec{u}\vec{w}} ( \Delta t \vec{C}^{n+1}_{\vec{w}\vec{w}} + \vecsym{\Gamma}^{-1})^{-1} (\vec{y}^{n+1} - \G(\vec{u}^{j,n})) \\
&- \Delta t \vec{C}^{n+1}_{\vec{u}\vec{u}} \vec{J}_\mathcal{A}^\intercal(\vec{u}^{j,n+1}) \vecsym{\lambda}^{j,n+1} + \mathcal{R}(\vec{u}^{j,n+1},\vecsym{\lambda}^{j,n+1})
\end{aligned}
\end{equation}
where $\mathcal{R}$ is a term of order $\mathcal{O}(\Delta t^2)$, in fact
$$
\mathcal{R}(\vec{u}^{j,n+1},\vecsym{\lambda}^{j,n+1}) = \Delta t^2 \vec{C}^{n+1}_{\vec{u}\vec{w}} ( \Delta t \vec{C}^{n+1}_{\vec{w}\vec{w}} + \vecsym{\Gamma}^{-1})^{-1} \vec{C}^{{n+1}^\intercal}_{\vec{u}\vec{w}} \vec{J}_\mathcal{A}^\intercal(\vec{u}^{j,n+1}) \vecsym{\lambda}^{j,n+1}.
$$
Now we interpret the parameter $\Delta t$ as an artificial time step for the iteration, i.e.~we take $\Delta t \sim N_t^{-1}$ where $N_t$ is the maximum number of iterations. Assume then $\vec{u}^{j,n} \approx \vec{u}^j(n\Delta t)$ and $\vecsym{\lambda}^{j,n} \approx \vecsym{\lambda}^j(n\Delta t)$ for $n\geq 0$ and $j=1,\dots,J$. Computing the limit $\Delta t\to 0^+$, \eqref{eq:updateControlScaled} is a first order implicit--explicit approximation of the following system of ordinary differential equations (ODEs):
\begin{equation} \label{eq:limitODE}
\frac{\mathrm{d}}{\mathrm{d} t} \vec{u}^j = \vec{C}_{\vec{u}\vec{w}} \vecsym{\Gamma} (\vec{y} - \G(\vec{u}^j)) - \vec{C}_{\vec{u}\vec{u}} \vec{J}_\mathcal{A}^\intercal(\vec{u}^j) \vecsym{\lambda}^j, \quad j=1,\dots,J.
\end{equation}
Imposing that the constraint needs to be satisfied at each time $t > 0$, we obtain the system of DAEs~\eqref{eq:limitDAE} endowed with initial conditions $\vec{u}^{j,0} = \vec{u}^j(0)\in\R^d$ such that $\mathcal{A}(\vec{u}^{j,0}) = \vec{0}_{\R^m}$.
\end{proof}

In the previous proposition we have seen~\eqref{eq:updateControl} as first order implicit--explicit time discretization of~\eqref{eq:limitDAE}. However, we stress again the fact that $\vec{C}^{n+1}_{\vec{u}\vec{u}}$ is actually computed with information at time level $n$, and therefore explicitly, while it appears in the term which is treated implicitly. Nevertheless, $\vec{C}^{n+1}_{\vec{u}\vec{u}}$ can be written in terms of the average quantities, which typically change in a small time scale, justifying therefore the use of an explicit evaluation.

\begin{corollary}
	Assume that $\G = \vec{G}$, with $\vec{G} \in \mathcal{L}(\R^d,\R^K)$, with $\mathcal{L}(\R^d,\R^K)$ space of linear operators mapping $\R^d$ to $\R^K$. The DAE system~\eqref{eq:limitDAE} can be written in terms of the gradient of the least squares functional $\Phi$~\eqref{eq:leastSqFnc}, obtaining
	\begin{equation} \label{eq:limitDAElinearG}
	\begin{aligned}
	\frac{\mathrm{d}}{\mathrm{d} t} \vec{u}^j &= -\vec{C}_{\vec{u}\vec{u}} (\nabla_{\vec{u}} \Phi (\vec{u}^j,\vec{y}) + \vec{J}_\mathcal{A}^\intercal(\vec{u}^j) \vecsym{\lambda}^j) \\
	\vec{0}_{\R^m} &= \mathcal{A}(\vec{u}^j(t)).
	\end{aligned}
	\end{equation}
\end{corollary}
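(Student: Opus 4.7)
The plan is to show that, under the linearity of $\G$, the ``data-fit'' drift term $\vec{C}_{\vec{u}\vec{w}} \vecsym{\Gamma}(\vec{y} - \G(\vec{u}^j))$ appearing in the first line of~\eqref{eq:limitDAE} can be rewritten as $-\vec{C}_{\vec{u}\vec{u}} \nabla_{\vec{u}} \Phi(\vec{u}^j,\vec{y})$, while the constraint line and the multiplier term remain untouched. Factoring out $-\vec{C}_{\vec{u}\vec{u}}$ then yields~\eqref{eq:limitDAElinearG}.

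First, I would compute $\nabla_{\vec{u}} \Phi$ directly from~\eqref{eq:leastSqFnc}. With $\G(\vec{u}) = \vec{G}\vec{u}$ we have $\Phi(\vec{u},\vec{y}) = \tfrac12 (\vec{y}-\vec{G}\vec{u})^\intercal \vecsym{\Gamma} (\vec{y}-\vec{G}\vec{u})$, so differentiating gives
\begin{equation*}
\nabla_{\vec{u}} \Phi(\vec{u},\vec{y}) = -\vec{G}^\intercal \vecsym{\Gamma} (\vec{y} - \vec{G}\vec{u}),
\end{equation*}
using that $\vecsym{\Gamma}$ is symmetric positive definite.

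The key identity to establish is $\vec{C}_{\vec{u}\vec{w}} = \vec{C}_{\vec{u}\vec{u}} \vec{G}^\intercal$ in the linear case. This follows from the definition of $\vec{C}_{\vec{u}\vec{w}}$ recalled after~\eqref{eq:covMatrix}: since $\G$ is linear, $\overline{\G} = \vec{G}\,\overline{\vec{u}}$, and hence $\G(\vec{u}^k) - \overline{\G} = \vec{G}(\vec{u}^k - \overline{\vec{u}})$. Plugging this into the ensemble average and pulling $\vec{G}^\intercal$ out of the outer product on the right yields the identity.

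Combining the two, $\vec{C}_{\vec{u}\vec{w}} \vecsym{\Gamma}(\vec{y}-\vec{G}\vec{u}^j) = \vec{C}_{\vec{u}\vec{u}} \vec{G}^\intercal \vecsym{\Gamma}(\vec{y}-\vec{G}\vec{u}^j) = -\vec{C}_{\vec{u}\vec{u}} \nabla_{\vec{u}}\Phi(\vec{u}^j,\vec{y})$, and substitution into~\eqref{eq:limitDAE} gives the claimed form. There is no real obstacle here: the argument is just the standard observation that in the linear-forward case the cross-covariance collapses to $\vec{C}_{\vec{u}\vec{u}}\vec{G}^\intercal$, so the EnKF drift is a preconditioned gradient step on the misfit $\Phi$ with preconditioner $\vec{C}_{\vec{u}\vec{u}}$. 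The only minor care is to verify the identity $\vec{C}_{\vec{u}\vec{w}} = \vec{C}_{\vec{u}\vec{u}}\vec{G}^\intercal$ with the correct transposition conventions used in~\eqref{eq:covMatrix}; everything else is substitution.
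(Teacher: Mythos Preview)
Your proposal is correct and follows exactly the paper's own argument: the paper's proof consists precisely of the two observations $\vec{C}_{\vec{u}\vec{w}} = \vec{C}_{\vec{u}\vec{u}} \vec{G}^\intercal$ (from linearity of $\vec{G}$) and $\nabla_{\vec{u}} \Phi(\vec{u}^j,\vec{y}) = -\vec{G}^\intercal \vecsym{\Gamma}(\vec{y}-\vec{G}\vec{u}^j)$, followed by substitution into~\eqref{eq:limitDAE}. Your write-up is slightly more detailed in justifying each identity, but the route is identical.
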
 
\begin{proof}
	Due to the linearity of the model $\vec{G}$, we observe that $\vec{C}_{\vec{u}\vec{w}} = \vec{C}_{\vec{u}\vec{u}} \vec{G}^\intercal$ and use $\nabla_{\vec{u}} \Phi (\vec{u}^j,\vec{y}) = -\vec{G}^\intercal \vecsym{\Gamma} (\vec{y} - \vec{G}\vec{u}^j)$ to rewrite~\eqref{eq:limitDAE} as~\eqref{eq:limitDAElinearG}.
\end{proof}

In view of the discussion in Section~\ref{sec:linearEqConstr}, it is easy to show that in the case of a linear equality constraint $\mathcal{A}(\vec{u}) = \vec{A} \vec{u} \in \R^m$, the DAE system~\eqref{eq:limitDAElinearG} reduces to
\begin{equation} \label{eq:unconstrGradientFlow}
	\frac{\mathrm{d}}{\mathrm{d} t} \vec{u}^j = - \vec{C}_{\vec{u}\vec{u}} \nabla_{\vec{u}} \Phi (\vec{u}^j,\vec{y})
\end{equation}
which is the preconditioned gradient flow equation for the least square functional $\Phi$, studied e.g.~in~\cite{SchillingsPreprint,schillingsstuart2017,schillingsstuart2018}. We expect that~\eqref{eq:unconstrGradientFlow} still allows each ensemble member to satisfy the linear constraint if the initial condition is feasible, again as consequence of the subspace property of the EnKF which holds also in the continuous dynamics~\cite{schillingsstuart2017}.

We observe that the differential equation in~\eqref{eq:limitDAElinearG} can be written as
\begin{equation} \label{eq:constrGradientFlow}
\begin{aligned}
\frac{\mathrm{d}}{\mathrm{d} t} \vec{u}^j &= - \vec{C}_{\vec{u}\vec{u}} \nabla_{\vec{u}} \Psi(\vec{u}^j,\vecsym{\lambda}^j,\vec{y}) \\
\Psi(\vec{u}^j,\vecsym{\lambda}^j,\vec{y}) &= \Phi (\vec{u}^j,\vec{y}) + \sum_{k=1}^m \vecsym{\lambda}^j_{(k)} \mathcal{A}_{(k)}(\vec{u}^j)
\end{aligned}
\end{equation}
and therefore it still has the structure of a preconditioned gradient type flow, where the flow $\Phi$ is perturbed along the direction of a linear combination of the constraints. However, while $\Phi$ is convex, $\Psi$ is not necessarily convex even if the constraint is since the method lacks information about the sign of the multipliers.

\subsubsection{Analysis in the case of a linear model} \label{sec:linearModel}

DAE systems are usually characterized by two indices, namely the perturbation index and the differentiation index. 
Special DAEs are the Hessenberg--type, having the property that perturbation and differentiation indices coincide. In particular, a DAE is said to be of high order index if their perturbation and differentiation indices are greater than or equal to $2$. Under suitable sufficient conditions it is possible to guarantee that a DAE has indices equal to $1$~\cite{GerdtsBook}.

\begin{proposition} \label{th:indexDAE}
	Let $I = [t_0,T] \subset \R$, $t_0 < T$ be a compact time interval. Let $\vec{x}:I\to\R^{d_\vec{x}}$ and $\vec{y}:I\to\R^{d_\vec{y}}$. Consider the semi--explicit DAE system
	\begin{align*}
	\frac{\mathrm{d}}{\mathrm{d}t} \vec{x}(t) = \vec{f}(t,\vec{x}(t),\vec{y}(t)),\\
	\vec{0}_{\R^{d_\vec{y}}} = \vec{g}(t,\vec{x}(t),\vec{y}(t)).
	\end{align*}
	Assume that
	\begin{itemize}
		\item[(a)] $\vec{f}$ is Lipschitz continuous with respect to $\vec{x}$ and $\vec{y}$ with Lipschitz constant $L_\vec{f}$ uniformly with respect to $t$;
		\item[(b)] $\vec{g}$ is continuously differentiable and $\vec{J}_\vec{g}^\vec{y} = [\nabla_\vec{y} g_1,\dots,\nabla_\vec{y} g_{d_\vec{y}}]^\intercal$ is non--singular and bounded for all $(t,\vec{x},\vec{y}) \in I\times \R^{d_\vec{x}} \times \R^{d_\vec{y}}$.
	\end{itemize}
	Then, the semi--explicit DAE system has perturbation index $1$. If additionally the inverse of $\vec{J}_\vec{g}^\vec{y}$ is bounded for all $(t,\vec{x},\vec{y}) \in I\times \R^{d_\vec{x}} \times \R^{d_\vec{y}}$, then the semi--explicit DAE system has differentiation index $1$.
\end{proposition}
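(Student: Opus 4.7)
The plan is to handle the two index claims separately, following the standard DAE framework as in Gerdts's book cited by the authors. The differentiation index is the simpler of the two: one differentiates the algebraic constraint the minimum number of times so that the resulting enlarged system is (pointwise solvable as) an ODE. The perturbation index is established by comparing solutions of the original DAE to those of a perturbed DAE with defects $\vecsym{\delta}_1(t)$ in the differential part and $\vecsym{\delta}_2(t)$ in the algebraic part, and showing a Gronwall-type bound that involves $\sup_t \|\vecsym{\delta}_1\|$ and $\sup_t \|\vecsym{\delta}_2\|$ but no derivatives of the defects.

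For the differentiation index, I would differentiate $\vec{g}(t,\vec{x}(t),\vec{y}(t)) = \vec{0}$ once with respect to $t$ along a solution, obtaining
$$
\partial_t \vec{g} + \vec{J}_{\vec{g}}^{\vec{x}}(t,\vec{x},\vec{y})\,\frac{d\vec{x}}{dt} + \vec{J}_{\vec{g}}^{\vec{y}}(t,\vec{x},\vec{y})\,\frac{d\vec{y}}{dt} = \vec{0}.
$$
Substituting $\frac{d\vec{x}}{dt} = \vec{f}(t,\vec{x},\vec{y})$ and using the additional hypothesis that $(\vec{J}_{\vec{g}}^{\vec{y}})^{-1}$ is bounded on $I \times \R^{d_{\vec{x}}} \times \R^{d_{\vec{y}}}$ (so in particular invertibility is uniform), I can explicitly solve for $\frac{d\vec{y}}{dt}$, yielding an explicit ODE system for $(\vec{x},\vec{y})$. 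Since one differentiation sufficed, the differentiation index is $1$.

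For the perturbation index, I would consider the perturbed DAE $\frac{d\tilde{\vec{x}}}{dt} = \vec{f}(t,\tilde{\vec{x}},\tilde{\vec{y}}) + \vecsym{\delta}_1(t)$, $\vec{g}(t,\tilde{\vec{x}},\tilde{\vec{y}}) = \vecsym{\delta}_2(t)$, with consistent initial data, and proceed in three steps. First, using the nonsingularity and boundedness of $\vec{J}_{\vec{g}}^{\vec{y}}$ from hypothesis (b), the implicit function theorem applied to $\vec{g}(t,\vec{x},\vec{y}) - \vecsym{\delta}_2 = \vec{0}$ yields a local representation $\tilde{\vec{y}} = \vec{h}(t,\tilde{\vec{x}},\vecsym{\delta}_2)$, with $\vec{h}$ Lipschitz in its last two arguments (the Lipschitz constants depending on $\|(\vec{J}_{\vec{g}}^{\vec{y}})^{-1}\|$ and $\|\vec{J}_{\vec{g}}^{\vec{x}}\|$). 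Second, substituting this expression into the differential equation, using the Lipschitz assumption (a) on $\vec{f}$, and applying Gronwall's inequality gives a bound
$$
\|\tilde{\vec{x}}(t) - \vec{x}(t)\| \leq C \bigl( \|\tilde{\vec{x}}(t_0) - \vec{x}(t_0)\| + \sup_{s \in I} \|\vecsym{\delta}_1(s)\| + \sup_{s \in I} \|\vecsym{\delta}_2(s)\| \bigr).
$$
Third, this estimate propagates via $\vec{h}$ to a corresponding bound on $\|\tilde{\vec{y}}(t) - \vec{y}(t)\|$. Since the right-hand side involves only $\vecsym{\delta}_1, \vecsym{\delta}_2$ and not their derivatives, the perturbation index equals $1$.

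The main obstacle lies in the perturbation index step, specifically in making the implicit-function argument global enough to cover all of $I\times\R^{d_{\vec{x}}}\times\R^{d_{\vec{y}}}$ and in tracking how the Lipschitz constant of $\vec{h}$ enters the Gronwall estimate; here the boundedness of $\vec{J}_{\vec{g}}^{\vec{y}}$ ensures a quantitative mean-value-type bound on $\vec{g}$, while the (locally) bounded inverse controls the resolved map $\vec{h}$. The differentiation index claim by contrast is essentially a direct calculation once the constraint is differentiated.
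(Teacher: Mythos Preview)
The paper does not supply its own proof of this proposition: it is stated as a known result, with the preceding sentence pointing to Gerdts's monograph~\cite{GerdtsBook} for the relevant sufficient conditions. So there is no paper proof to compare against in the strict sense.

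That said, your sketch follows the standard argument and is essentially correct. In fact, the paper implicitly endorses exactly your strategy for the perturbation index: in the corollary immediately following (the perturbation bound for the specific DAE~\eqref{eq:limitDAElinearG}), the authors carry out precisely the steps you outline---apply the implicit function theorem to the algebraic equation to express the algebraic variable as a Lipschitz function of the differential variable and the defect $\vecsym{\delta}_2$, substitute into the differential equation, and close with Gronwall's lemma. Your identification of the globalization of the implicit-function step as the delicate point is also accurate; the paper's hypothesis (b) that $\vec{J}_{\vec{g}}^{\vec{y}}$ is nonsingular and bounded on all of $I\times\R^{d_{\vec{x}}}\times\R^{d_{\vec{y}}}$ is exactly what one invokes to pass from a local to a uniform resolved map. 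The differentiation-index part is, as you say, a direct computation.
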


Using Proposition~\ref{th:indexDAE}, the following results hold true for~\eqref{eq:limitDAElinearG}, in which we recall that the dependence of the algebraic equation on the multipliers is given implicitly by the differential variables, namely $\mathcal{A}(\vec{u}_j(t))$ is in fact $\mathcal{A}(\vec{u}_j(t;\vecsym{\lambda}_j))$.

\begin{corollary} \label{th:indexGradientFlow}
	Consider the semi--explicit DAE system~\eqref{eq:limitDAElinearG} for each fixed $j=1,\dots,J$. Assume that condition (b) in Proposition~\ref{th:indexDAE} holds for the vector valued function $\mathcal{A}$ with respect to the algebraic variables $\vecsym{\lambda}$. Then, system~\eqref{eq:limitDAElinearG} has perturbation index $1$.  If additionally the inverse of $\vec{J}_\mathcal{A}^{\vecsym{\lambda}^j}$ is bounded for all $(\vec{u}^j,\vecsym{\lambda}^j) \in \R^{d} \times \R^{m}$, then system~\eqref{eq:limitDAElinearG} has differentiation index $1$.
\end{corollary}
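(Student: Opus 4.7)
The plan is to verify the hypotheses of Proposition~\ref{th:indexDAE} for the DAE~\eqref{eq:limitDAElinearG} and then invoke it. For fixed $j$, I would identify $\vec{u}^j \in \R^d$ with the differential variable $\vec{x}$ and $\vecsym{\lambda}^j \in \R^m$ with the algebraic variable $\vec{y}$ of Proposition~\ref{th:indexDAE}. Treating the remaining ensemble states $\{\vec{u}^k\}_{k\neq j}$ as given time--dependent parameters, the covariance block $\vec{C}_{\vec{u}\vec{u}}$ becomes a known coefficient $\vec{C}_{\vec{u}\vec{u}}(t)$, and~\eqref{eq:limitDAElinearG} assumes the semi--explicit form with
\[
\vec{f}(t,\vec{u}^j,\vecsym{\lambda}^j) = -\vec{C}_{\vec{u}\vec{u}}(t)\bigl(\nabla_\vec{u}\Phi(\vec{u}^j,\vec{y}) + \vec{J}_\mathcal{A}^\intercal(\vec{u}^j)\vecsym{\lambda}^j\bigr), \qquad \vec{g}(t,\vec{u}^j,\vecsym{\lambda}^j) = \mathcal{A}(\vec{u}^j),
\]
where, as noted immediately above the corollary, the dependence of $\vec{g}$ on $\vecsym{\lambda}^j$ is mediated by the implicit dependence $\vec{u}^j = \vec{u}^j(t;\vecsym{\lambda}^j)$ generated by the ODE.

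Next I would verify condition (a). Since $\G$ is linear, $\nabla_\vec{u}\Phi(\vec{u}^j,\vec{y}) = -\vec{G}^\intercal\vecsym{\Gamma}(\vec{y}-\vec{G}\vec{u}^j)$ is affine in $\vec{u}^j$ and hence globally Lipschitz. Assuming $\mathcal{A}\in C^{1,1}$ with bounded Jacobian---consistent with the constraint qualification imposed in Section~\ref{sec:controlLinConstr}---the bilinear map $(\vec{u}^j,\vecsym{\lambda}^j)\mapsto \vec{J}_\mathcal{A}^\intercal(\vec{u}^j)\vecsym{\lambda}^j$ is jointly Lipschitz on bounded subsets of $\R^d\times\R^m$. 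Because the empirical covariance $\vec{C}_{\vec{u}\vec{u}}(t)$ is bounded uniformly in $t$ (its entries are quadratic polynomials in the ensemble values, which we regard as confined to a bounded region), the composition $\vec{f}$ is Lipschitz in $(\vec{u}^j,\vecsym{\lambda}^j)$ with a constant independent of $t$. Condition (b) is already assumed verbatim in the statement of the corollary, so nothing further is needed to invoke the first half of Proposition~\ref{th:indexDAE} and conclude that~\eqref{eq:limitDAElinearG} has perturbation index $1$. The additional boundedness hypothesis on $(\vec{J}_\mathcal{A}^{\vecsym{\lambda}^j})^{-1}$ activates the second half of the proposition and upgrades the conclusion to differentiation index $1$.

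The main conceptual hurdle is interpretive rather than computational: the algebraic equation $\mathcal{A}(\vec{u}^j)=\vec{0}$ contains $\vecsym{\lambda}^j$ only implicitly through $\vec{u}^j$, so the object $\vec{J}_\mathcal{A}^{\vecsym{\lambda}^j}$ is not the literal partial derivative of an explicit function but must be read through the chain rule $\vec{J}_\mathcal{A}^{\vecsym{\lambda}^j} = \vec{J}_\mathcal{A}(\vec{u}^j)\,\partial_{\vecsym{\lambda}^j}\vec{u}^j$. Once this identification is accepted---as sanctioned by the remark preceding the corollary---the condition (b) assumption becomes a transparent rank/invertibility condition on the sensitivity of the constraint values to the multipliers along the flow, and the remainder of the proof reduces to the routine Lipschitz check above together with a direct application of Proposition~\ref{th:indexDAE}.
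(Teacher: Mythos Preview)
Your proposal is correct and follows essentially the same approach as the paper: verify condition (a) of Proposition~\ref{th:indexDAE} via the regularity of $\mathcal{A}$ (and the affine structure of $\nabla_\vec{u}\Phi$), take condition (b) as assumed, and apply the proposition directly. The paper's own proof is terser---it simply asserts that continuous differentiability of $\mathcal{A}$ in $\vec{u}^j$ yields the required Lipschitz property of the right--hand side and then invokes Proposition~\ref{th:indexDAE}---whereas you spell out more of the bookkeeping (treating the other ensemble members as parameters, bounding the covariance, restricting to bounded subsets) and articulate the implicit--dependence issue with $\vec{J}_\mathcal{A}^{\vecsym{\lambda}^j}$ that the paper only flags in the remark preceding the corollary.
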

\begin{proof}
	Continuous differentiability of the vector valued function $\mathcal{A}$ with respect to the dynamical variable $\vec{u}^j$ gives a sufficient condition to the right hand side of the dynamical equation in~\eqref{eq:limitDAElinearG} to be Lipschitz continuous with respect to $\vec{u}^j$ and $\vecsym{\lambda}^j$, for each fixed $j=1,\dots,J$. Then, the statement follows as application of Proposition~\ref{th:indexDAE}.
\end{proof}

\begin{corollary}
	For each fixed $j=1,\dots,J$, consider the DAE system~\eqref{eq:limitDAElinearG} with initial condition $\vec{u}^j(0) = \vec{u}^{j,0}$. Let $(\tilde{\vec{u}}^j,\tilde{\vecsym{\lambda}}^j)$ be the solution of the perturbed system
	\begin{align*}
		\frac{\mathrm{d}}{\mathrm{d}t} \tilde{\vec{u}}^j &= -\vec{C}_{\tilde{\vec{u}}\tilde{\vec{u}}} \nabla_{\tilde{\vec{u}}} \Psi(\tilde{\vec{u}}^j,\tilde{\vecsym{\lambda}}^j,\vec{y}) + \vecsym{\delta}_1(t), \quad \tilde{\vec{u}}^j(0) = \tilde{\vec{u}}^{j,0} \\
		\vec{0}_{\R^m} &= \mathcal{A}(\tilde{\vec{u}}^j) + \vecsym{\delta}_2(t) 
	\end{align*}
	on $t\in[0,T]\subset\R$ and with $\Psi$ defined in~\eqref{eq:constrGradientFlow}. Then, under the assumptions of Corollary~\ref{th:indexGradientFlow}, $\exists\,L_1,L_2\geq 0$ such that the following bound holds true:
	$$
		\left\| \vec{u}^j(t) - \tilde{\vec{u}}^j(t) \right\| \leq \left( \left\| \vec{u}^{j,0} - \tilde{\vec{u}}^{j,0} \right\| + T\left( L_1 L_2 \max_{0\leq\tau\leq t} \left\| \vecsym{\delta}_2(\tau) \right\| + \max_{0\leq\tau\leq t} \left\| \vecsym{\delta}_1(\tau) \right\| \right) \right) \exp\left( L_1(1+L_2)t \right)
	$$
\end{corollary}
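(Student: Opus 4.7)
The plan is to exploit the index-$1$ structure of the semi-explicit DAE established in Corollary~\ref{th:indexGradientFlow}: eliminate the algebraic variable by an implicit function argument, reduce each of the two systems to an explicit ODE, and then close a Gronwall estimate on the difference of the resulting trajectories.

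\emph{Reducing the unperturbed DAE.} Differentiating the algebraic equation $\mathcal{A}(\vec{u}^j(t))=\vec{0}$ in time and substituting the differential equation from~\eqref{eq:limitDAElinearG} yields the hidden constraint
\[
\vec{J}_\mathcal{A}(\vec{u}^j)\,\vec{C}_{\vec{u}\vec{u}}\bigl(\nabla_{\vec{u}}\Phi(\vec{u}^j,\vec{y})+\vec{J}_\mathcal{A}^\intercal(\vec{u}^j)\,\vecsym{\lambda}^j\bigr)=\vec{0}.
\]
Under the invertibility assumption of Corollary~\ref{th:indexGradientFlow}, the matrix $\vec{J}_\mathcal{A}\vec{C}_{\vec{u}\vec{u}}\vec{J}_\mathcal{A}^\intercal$ is non-singular along the trajectory, so the hidden constraint can be solved for $\vecsym{\lambda}^j=\Lambda(\vec{u}^j)$ as a Lipschitz function of $\vec{u}^j$ alone, with Lipschitz constant $L_2$ controlled by $\|(\vec{J}_\mathcal{A}^{\vecsym{\lambda}^j})^{-1}\|$ and the ambient derivatives of $\mathcal{A}$. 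Substituting back yields the reduced ODE $\dot{\vec{u}}^j=F(\vec{u}^j):=-\vec{C}_{\vec{u}\vec{u}}\nabla_{\vec{u}}\Psi(\vec{u}^j,\Lambda(\vec{u}^j),\vec{y})$, which is Lipschitz with some constant $L_1$ by hypothesis (a) of Proposition~\ref{th:indexDAE}.

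\emph{Perturbed reduction and Gronwall.} For the perturbed system the algebraic equation reads $\mathcal{A}(\tilde{\vec{u}}^j)=-\vecsym{\delta}_2(t)$; the same implicit function argument applied with $\vecsym{\delta}_2$ as a parameter produces $\tilde{\vecsym{\lambda}}^j=\tilde\Lambda(\tilde{\vec{u}}^j,\vecsym{\delta}_2(t))$ with $\|\tilde\Lambda(\tilde{\vec{u}}^j,\vecsym{\delta}_2)-\Lambda(\tilde{\vec{u}}^j)\|\le L_2\|\vecsym{\delta}_2\|$. The reduced perturbed ODE thus has the form $\dot{\tilde{\vec{u}}}^j=F(\tilde{\vec{u}}^j)+\vecsym{\rho}(t)+\vecsym{\delta}_1(t)$ with $\|\vecsym{\rho}(t)\|\le L_1 L_2\|\vecsym{\delta}_2(t)\|$, which encodes the propagation of the constraint defect through the Lipschitz map $F$. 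Writing both reduced ODEs in integral form and subtracting, the error $\vec{e}(t)=\vec{u}^j(t)-\tilde{\vec{u}}^j(t)$ satisfies
\[
\|\vec{e}(t)\|\le\|\vec{u}^{j,0}-\tilde{\vec{u}}^{j,0}\|+L_1(1+L_2)\int_0^t\|\vec{e}(s)\|\,ds+T\bigl(L_1 L_2\max_{0\le\tau\le t}\|\vecsym{\delta}_2(\tau)\|+\max_{0\le\tau\le t}\|\vecsym{\delta}_1(\tau)\|\bigr),
\]
and the integral form of Gronwall's lemma gives the stated exponential estimate.

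\emph{Main obstacle.} The delicate step is the reduction itself: one has to guarantee uniform invertibility of $\vec{J}_\mathcal{A}\vec{C}_{\vec{u}\vec{u}}\vec{J}_\mathcal{A}^\intercal$ along all trajectories so that $\Lambda$ and $\tilde\Lambda$ are globally Lipschitz with a uniform $L_2$, and to track the Lipschitz dependence on the parameter $\vecsym{\delta}_2$. This is precisely the standard index-$1$ perturbation machinery for semi-explicit DAEs (cf.~\cite{GerdtsBook}); the Lipschitz bounds on $\nabla_{\vec{u}}\Phi$ together with the ensemble-dependent covariance $\vec{C}_{\vec{u}\vec{u}}$ are what enter into the constants $L_1$ and $L_2$, and once they are in place the Gronwall step is routine.
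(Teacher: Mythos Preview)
Your argument is correct and follows essentially the same route as the paper: eliminate the algebraic variable via an implicit-function/index-reduction step to obtain a Lipschitz map $\vecsym{\lambda}=\vecsym{\Lambda}(\vec{u},\vecsym{\delta}_2)$, bound $\|\vecsym{\lambda}^j-\tilde{\vecsym{\lambda}}^j\|\le L_2(\|\vec{u}^j-\tilde{\vec{u}}^j\|+\|\vecsym{\delta}_2\|)$, feed this into the integral form of the differential equation using the Lipschitz constant $L_1$ of the right-hand side, and close with Gronwall. The only cosmetic difference is that you make the index reduction explicit by differentiating the constraint and inverting $\vec{J}_\mathcal{A}\vec{C}_{\vec{u}\vec{u}}\vec{J}_\mathcal{A}^\intercal$, whereas the paper invokes the implicit function theorem abstractly on $\mathcal{A}(\tilde{\vec{u}}^j)+\vecsym{\delta}_2=\vec{0}$ relying on the non-singularity of $\vec{J}_\mathcal{A}^{\vecsym{\lambda}^j}$ from Corollary~\ref{th:indexGradientFlow}; the two are equivalent here. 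One small slip: you say the reduced map $F$ ``is Lipschitz with some constant $L_1$'', but as your own integral inequality shows, the correct Lipschitz constant of the composition $\vec{u}\mapsto -\vec{C}_{\vec{u}\vec{u}}\nabla_{\vec{u}}\Psi(\vec{u},\Lambda(\vec{u}),\vec{y})$ is $L_1(1+L_2)$, not $L_1$.
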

\begin{proof}
	The assumptions of Corollary~\ref{th:indexGradientFlow} guarantee that the implicit function theorem can be applied to the algebraic equation $\vec{0}_{\R^m}=\mathcal{A}(\tilde{\vec{u}}^j)+\vecsym{\delta}_2$. Therefore, we can solve for $\vecsym{\lambda}^j\in\R^m$, $\forall\,t\in[0,T]$, $\tilde{\vec{u}}^j\in\R^d$, obtaining $\tilde{\vecsym{\lambda}}^j = \vecsym{\Lambda}(\tilde{\vec{u}}^j,\vecsym{\delta}_2)$. Moreover, $\vecsym{\Lambda}$ is locally Lipschitz continuous with respect to $\tilde{\vec{u}}^j$ and $\vecsym{\delta}_2$ with Lipschitz constant $L_2$. Then for the multipliers we get the bound
	$$
		\left\| \vecsym{\lambda}^j(t) - \tilde{\vecsym{\lambda}}^j(t) \right\| = \left\| \vecsym{\Lambda}(\vec{u}^j(t),\vec{0}_{\R^m}) - \vecsym{\Lambda}(\tilde{\vec{u}}^j(t),\vecsym{\delta}_2) \right\| \leq L_2 \left( \left\| \vec{u}^j(t) - \tilde{\vec{u}}^j(t) \right\| + \left\| \vecsym{\delta}_2(t) \right\| \right).
	$$
	Let $L_1$ be the Lipschitz constant of the right--hand side of the dynamical equation. We have
	\begin{align*}
		\left\| \vec{u}^j(t) - \tilde{\vec{u}}^j(t) \right\| \leq& \left\| \vec{u}^{j,0} - \tilde{\vec{u}}^{j,0} \right\| + L_1 \int_0^t \left\| \vec{u}^j(\tau) - \tilde{\vec{u}}^j(\tau) \right\| + \left\| \vecsym{\lambda}^j(\tau) - \tilde{\vecsym{\lambda}}^j(\tau) \right\| \mathrm{d}\tau + \left\| \int_0^t \vecsym{\delta}_1(\tau) \mathrm{d}\tau \right\| \\
		\leq& \left\| \vec{u}^{j,0} - \tilde{\vec{u}}^{j,0} \right\| + L_1 \left( 1+L_2 \right) \int_0^t \left\| \vec{u}^j(\tau) - \tilde{\vec{u}}^j(\tau) \right\| \mathrm{d}\tau \\
		&+ L_1 L_2 \int_0^t \left\| \vecsym{\delta}_2(\tau) \right\| \mathrm{d}\tau + \int_0^t \left\| \vecsym{\delta}_1(\tau) \right\| \mathrm{d}\tau.
	\end{align*}
	Then the statement follows easily by application of the Gronwall's lemma.
\end{proof}

The DAE system~\eqref{eq:limitDAElinearG} is derived by starting from the first order necessary optimality conditions stated in Proposition~\ref{th:optimality}. We recall that these conditions are also sufficient if the feasible set is convex, see Corollary~\ref{th:convex}. Now we study the large time behavior of the solution of the DAE system~\eqref{eq:limitDAElinearG}.

Let us introduce the following notation. We define $\vec{\overline{m}}(t)$ and $\doverline{\vec{m}}(t)$ be the mean and the second moment of the ensembles, respectively, at time $t\geq 0$, namely
$$
\vec{\overline{m}}(t) = \frac{1}{J} \sum_{j=1}^J \vec{u}^j(t), \quad \doverline{\vec{m}}(t) = \frac{1}{J} \sum_{j=1}^J \vec{u}^j(t) \otimes \vec{u}^j(t).
$$
Further, for each $j=1,\dots,J$, we define
\begin{equation} \label{eq:spreadresidual}
\vec{e}^j(t) = \vec{u}^j(t) - \vec{\overline{m}}(t), \quad \vec{r}^j(t) = \vec{u}^j(t) - \vec{u}^* 
\end{equation}
be the ensemble spread 
and the residual to a value $\vec{u}^*$, respectively.
We observe that the evolution in time of $\vec{\overline{m}}$ and $\doverline{\vec{m}}$ are governed by
\begin{equation} \label{eq:ensembleMean}
\begin{aligned}
\frac{\mathrm{d}}{\mathrm{d} t} \vec{\overline{m}}(t) &= \frac{1}{J} \sum_{j=1}^J \frac{\mathrm{d}}{\mathrm{d} t} \vec{u}^j(t) = - \frac{1}{J} \sum_{j=1}^J \vec{C}_{\vec{u}\vec{u}} \left( \nabla_{\vec{u}} \Phi(\vec{u}^j,\vec{y}) + \vec{J}_\mathcal{A}^\intercal(\vec{u}^j) \vecsym{\lambda}^j \right)\\
&= -\vec{C}_{\vec{u}\vec{u}} \left( \nabla_{\vec{u}} \Phi\left(\frac{1}{J}\sum_{j=1}^J \vec{u}^j,\vec{y}\right)
+ \frac{1}{J} \sum_{j=1}^J \vec{J}_\mathcal{A}^\intercal(\vec{u}^j) \vecsym{\lambda}^j \right)\\
&= -\vec{C}_{\vec{u}\vec{u}} \left( \nabla_{\vec{u}} \Phi(\vec{\overline{m}},\vec{y}) + \sum_{k=1}^m \frac{1}{J} \sum_{j=1}^J \vecsym{\lambda}^j_{(k)} \nabla_{\vec{u}} \mathcal{A}_{(k)}(\vec{u}^j) \right), \\
\frac{\mathrm{d}}{\mathrm{d} t} \doverline{\vec{m}}(t) &= \frac{1}{J} \sum_{j=1}^J \vec{u}^j(t) \otimes \vec{u}^j(t) = \frac{2}{J} \sum_{j=1}^J \left(\frac{\mathrm{d}}{\mathrm{d}t} \vec{u}^j(t) \right) \otimes \vec{u}^j(t) \\
&= - \frac{2}{J} \sum_{j=1}^J \vec{C}_{\vec{u}\vec{u}} \left( \nabla_{\vec{u}} \Phi(\vec{u}^j,\vec{y}) + \vec{J}_\mathcal{A}^\intercal(\vec{u}^j) \vecsym{\lambda}^j \right) \otimes \vec{u}^j(t)\\
&= -2 \vec{C}_{\vec{u}\vec{u}} \left( \nabla_{\vec{u}} \Phi\left(\frac{1}{J}\sum_{j=1}^J \vec{u}^j \otimes \vec{u}^j,\vec{y} \otimes \frac{1}{J}\sum_{j=1}^J \vec{u}^j \right)
+ \frac{1}{J} \sum_{j=1}^J \vec{J}_\mathcal{A}^\intercal(\vec{u}^j) \vecsym{\lambda}^j \otimes \vec{u}^j \right) \\
&= - 2 \vec{C}_{\vec{u}\vec{u}} \left( \nabla_{\vec{u}} \Phi\left(\doverline{\vec{m}},\vec{y}\otimes\vec{\overline{m}}\right) + \frac{1}{J} \sum_{j=1}^J \vec{J}_\mathcal{A}^\intercal(\vec{u}^j) \vecsym{\lambda}^j \otimes \vec{u}^j \right)
\end{aligned}
\end{equation}
where we have used the linearity of $\nabla_{\vec{u}}\Phi$. While the evolution in time of $\vec{e}^j$ is governed by
\begin{equation} \label{eq:ensembleSpread}
\begin{aligned}
\frac{\mathrm{d}}{\mathrm{d} t} \vec{e}^j(t) &= -\vec{C}_{\vec{u}\vec{u}} \left( \vec{G}^\intercal \vecsym{\Gamma} \vec{G} \vec{e}^j + \vec{J}_\mathcal{A}^\intercal(\vec{u}^j) \vecsym{\lambda}^j - \frac{1}{J} \sum_{\ell=1}^J \vec{J}_\mathcal{A}^\intercal(\vec{u}^\ell) \vecsym{\lambda}^\ell \right) \\
&= -\vec{C}_{\vec{u}\vec{u}} \left( \vec{G}^\intercal \vecsym{\Gamma} \vec{G} \vec{e}^j + \sum_{k=1}^m \vecsym{\lambda}^j_{(k)} \nabla_{\vec{u}} \mathcal{A}_{(k)}(\vec{u}^j) - \sum_{k=1}^m \frac{1}{J} \sum_{\ell=1}^J \vecsym{\lambda}^\ell_{(k)} \nabla_{\vec{u}} \mathcal{A}_{(k)}(\vec{u}^\ell) \right), \quad j=1,\dots,J,
\end{aligned}
\end{equation}
respectively. The covariance matrix can be written in terms of the mean and the second moment of the ensembles as
\begin{equation} \label{eq:covariance}
	\vec{C}_{\vec{u}\vec{u}}(t) : \left( \vec{u}^1(t), \dots, \vec{u}^J(t) \right) \in \R^d\times\dots\times\R^d \longmapsto \doverline{\vec{m}}(t) - \vec{\overline{m}}(t) \otimes \vec{\overline{m}}(t)\in\R^{d\times d}.
\end{equation}
Equations~\eqref{eq:ensembleMean} and~\eqref{eq:ensembleSpread} are coupled with $J$ algebraic equations imposing the validity of the constraint for each ensemble member. In particular, we stress the fact that the systems for the ensemble mean, second moment and spread are not closed, due to the presence of the multipliers and since the constraint is nonlinear and needs to be satisfied by each ensemble member. This requires the knowledge of the evolution in time of the ensembles. Further, it is not ensured that the mean of the ensemble satisfies the constraint. 

We prove the following results for the residuals in the space of the control.

\begin{proposition} \label{th:ensembleKKT}
	Let $\vec{u}^{j,0} \in \R^d$ be an initial condition of the DAE system~\eqref{eq:limitDAElinearG} such that $\mathcal{A}(\vec{u}^{j,0}) = \vec{0}_{\R^m}$, for all $j=1,\dots,J$. Then there exists a steady state $(\vec{u}^{j,\infty},\vecsym{\lambda}^{j,\infty}) \in \R^d\times\R^m$ of the DAE dynamics which is a KKT point of the minimization problem
	$$\min_{\vec{u}\in\R^d} \Phi(\vec{u},\vec{y}) \ \mbox{ subject to } \ \mathcal{A}(\vec{u}) = \vec{0}_{\R^m}$$
	for a given $\vec{y}\in\R^K$.
	Moreover, if $\mathcal{A}(\vec{u}) = \vec{0}_{\R^m}$ is a convex feasible domain, then $\vec{u}^{j,\infty}$ provides a first order approximation of $\vec{u}^*$ being an optimal solution of the minimization problem.
\end{proposition}
\begin{proof}
	We have that a steady state $(\vec{u}^{j,\infty},\vecsym{\lambda}^{j,\infty})$ of the DAE system~\eqref{eq:limitDAElinearG} solves
	$$
	\vec{0}_{\R^d} = \nabla_{\vec{u}} \Phi (\vec{u}^{j,\infty},\vec{y}) + \sum_{k=1}^m \vecsym{\lambda}^{j,\infty}_{(k)} \nabla_{\vec{u}} \mathcal{A}_{(k)}(\vec{u}^{j,\infty}), \quad \vec{0}_{\R^m} = \mathcal{A}(\vec{u}^{j,\infty}).
	$$
	For a given $\vec{y}\in\R^K$, this is a KKT system and they are the first order necessary optimality conditions for $\vec{u}^{j,\infty}$ as solution to the minimization problem $\min_{\vec{u}\in\R^d} \Phi(\vec{u},\vec{y})$ subject to $\mathcal{A}(\vec{u}) = \vec{0}_{\R^m}$. If the set $\mathcal{A}(\vec{u}) = \vec{0}_{\R^m}$ is convex, the KKT conditions are also sufficient. Then $\vec{u}^{j,\infty}$ is a solution of the minimization problem.
\end{proof}

Since the typical estimator of the EnKF is provided by the mean of the ensemble, we discuss properties of the ensemble mean.

\begin{proposition} \label{th:meanKKT}
	%
	Let $\vec{u}^0 = \{ \vec{u}^{j,0} \}_{j=1}^J$ be an initial condition of the DAE system~\eqref{eq:limitDAElinearG} such that $\mathcal{A}(\vec{u}^{j,0}) = \vec{0}_{\R^m}$, for all $j=1,\dots,J$. Then there exists a steady state $(\vec{\overline{m}}^{\infty},\vecsym{\lambda}^{1,\infty},\dots,\vecsym{\lambda}^{J,\infty}) \in \R^d\times\R^m\times\cdots\times\R^m$ of the dynamical system~\eqref{eq:ensembleMean} of the first moment $\vec{\overline{m}}$ which is a KKT point of the minimization problem
	$$\min_{\vec{u}^1,\dots,\vec{u}^J} \Phi(\vec{\overline{m}},\vec{y}) \ \mbox{ subject to } \ \mathcal{A}(\vec{u}^1) = \dots = \mathcal{A}(\vec{u}^J) = \vec{0}_{\R^m}$$
	for a given $\vec{y}\in\R^K$.
	Moreover, if $\mathcal{A}(\vec{u}) = \vec{0}_{\R^m}$ is a convex feasible domain, then $\vec{\overline{m}}^{\infty}$ provides a first order approximation of an optimal solution of the minimization problem.
\end{proposition}
\begin{proof}
	We have that a steady state $(\vec{\overline{m}}^{\infty},\vecsym{\lambda}^{1,\infty},\dots,\vecsym{\lambda}^{J,\infty})$ of~\eqref{eq:ensembleMean} solves
	$$
	\vec{0}_{\R^d} = \nabla_{\vec{u}} \Phi (\vec{\overline{m}}^{\infty},\vec{y}) + \sum_{k=1}^m \frac{1}{J} \sum_{j=1}^J \vecsym{\lambda}^{j,\infty}_{(k)} \nabla_{\vec{u}} \mathcal{A}_{(k)}(\vec{u}^{j,\infty}), \quad \vec{0}_{\R^m} = \mathcal{A}(\vec{u}^{J,\infty}) = \dots = \mathcal{A}(\vec{u}^{1,\infty})
	$$
	where the $\vec{u}^{j,\infty}$'s are the steady states of~\eqref{eq:limitDAElinearG}. For a given $\vec{y}\in\R^K$, this is a KKT system and they are the first order necessary optimality conditions for $\vec{\overline{m}}^{\infty}$ as solution to the minimization problem $\min_{\vec{u}} \Phi(\vec{\overline{m}},\vec{y})$ subject to $\mathcal{A}(\vec{u}^1) = \dots = \mathcal{A}(\vec{u}^J) = \vec{0}$. If the set $\mathcal{A}(\vec{u}) = \vec{0}_{\R^m}$ is convex, the KKT conditions are also sufficient. Then $\vec{\overline{m}}^{\infty}$ is a solution of the minimization problem.
\end{proof}

\begin{remark}
	The optimization problems in Proposition~\ref{th:ensembleKKT} and Proposition~\ref{th:meanKKT} are the same if the constraint is linear.
\end{remark}

From Proposition~\ref{th:meanKKT} it is clear that the steady state of the mean of the ensembles depends on the steady states of the ensembles. Moreover, Proposition~\ref{th:meanKKT} does not guarantee that the mean of the ensembles  at equilibrium satisfies the constraint. A sufficient condition to guarantee that $\mathcal{A}(\vec{\overline{m}}^\infty)=\vec{0}_{\R^m}$ is that the KKT point belongs to the set $\vec{C}_{\vec{u}\vec{u}}^\infty = \vec{0}_{\R^{d\times d}}$, which is a set of possible equilibria for~\eqref{eq:limitDAElinearG} and~\eqref{eq:ensembleMean}. In fact, we note that a concentration of the particles at any point of $\R^d$ is a stationary solution of the dynamics. In this case $\left\| \vec{e}^{j,\infty} \right\| = 0$, for all $j=1,\dots,J$. Therefore the question whether or not all the equilibria are necessary in the kernel of $\vec{C}_{\vec{u}\vec{u}}^\infty$ is to be discussed. This is true in the unconstrained ensemble Kalman filter as proved in~\cite{HertyVisconti2019}.

The following counterexample shows that not all the equilibrium solutions of~\eqref{eq:limitDAElinearG} belong to the set $\vec{C}_{\vec{u}\vec{u}} = \vec{0}_{\R^{d\times d}}$. We consider the case of a one--dimensional control and two ensembles $u, v\in\R$, so that $d=1$ and $J=2$. In addition, we take a scalar quadratic and convex constraint $\mathcal{A}(u) = \frac{1}{2} h_1 u^2 + h_2 u$, with $h_1>0$. In order to study the steady state of the mean and the second moment of the ensembles, we need to couple the evolution equations~\eqref{eq:ensembleMean} with the system of DAEs~\eqref{eq:limitDAElinearG}, obtaining
\begin{equation} \label{eq:oneDimCase}
	\begin{aligned}
		\frac{\mathrm{d}}{\mathrm{d} t} u &= - C_{uv} \left( -G^\intercal \Gamma (y-G u) + h_1 \lambda u + h_2 \lambda \right), \\
		\frac{\mathrm{d}}{\mathrm{d} t} v &= - C_{uv} \left( -G^\intercal \Gamma (y-G v) + h_1 \mu v + h_2 \mu \right), \\
		0 &= \mathcal{A}(u) = \mathcal{A}(v), \\
		\frac{\mathrm{d}}{\mathrm{d} t} \overline{m} &= - C_{uv} \left( -G^\intercal \Gamma (y-G \overline{m}) + \frac{h_1}{2} \left( \lambda u + \mu v \right) + h_2 \left( \lambda + \mu \right) \right), \\
		\frac{\mathrm{d}}{\mathrm{d} t} \doverline{m} &= - 2 C_{uv} \left( -G^\intercal \Gamma (y\overline{m}-G \doverline{m}) + \frac{h_1}{2} \left( \lambda u^2 + \mu v^2 \right) + h_2 \left( \lambda u + \mu v \right) \right).
	\end{aligned}
\end{equation}
Here, $\lambda$ and $\mu$ are the two multipliers related to $u$ and $v$, respectively. We compute the steady states of~\eqref{eq:oneDimCase} by solving $\dot{u}=0$, $\dot{v}=0$, $\mathcal{A}(u)=0$, $\mathcal{A}(v)=0$, $\dot{\overline{m}}=0$, $\dot{\doverline{m}}=0$ obtaining the nullclines in the phase space $(u,v,\lambda,\mu,\overline{m},\doverline{m})$. Solutions are provided by either $C_{uv} = \doverline{m} - \overline{m}^2 = 0$ or
\begin{equation} \label{eq:solutionsEq}
	\begin{aligned}
		u &= \left( G^\intercal \Gamma G + h_1 \lambda \right)^{-1} \left( G^\intercal \Gamma y - h_2 \lambda \right), \\
		v &= \left( G^\intercal \Gamma G + h_1 \mu \right)^{-1} \left( G^\intercal \Gamma y - h_2 \mu \right), \\
		\overline{m} &= \left( G^\intercal \Gamma G \right)^{-1} \left( G^\intercal \Gamma y - \frac{h_1}{2} \left( \lambda u + \mu v \right) - \frac{h_2}{2} \left( \lambda + \mu \right) \right), \\
		\doverline{m} &= \left( G^\intercal \Gamma G \right)^{-1} \left( G^\intercal \Gamma y \overline{m} - \frac{h_1}{2} \left( \lambda u^2 + \mu v^2 \right) - \frac{h_2}{2} \left( \lambda u + \mu v \right) \right),
	\end{aligned}
\end{equation}
where $\lambda$ and $\mu$ are solutions $\mathcal{A}(u)=0$ and $\mathcal{A}(v)=0$, respectively. The equilibrium or fixed points are the intersections of the nullclines and therefore the question becomes whether or not $(\overline{m},\doverline{m})$ always belongs to the set where $\doverline{m}-\overline{m}^2=0$. It is easy to observe that this is possible if and only if $\lambda = \mu$, since then
\begin{align*}
	\overline{m} &= \left( G^\intercal \Gamma G + h_1 \lambda \right)^{-1} \left( G^\intercal \Gamma y - h_2 \lambda \right), \\
	\doverline{m} &= \left( G^\intercal \Gamma G + h_1 \lambda \right)^{-1} \left( G^\intercal \Gamma y - h_2 \lambda \right) \overline{m}.
\end{align*}
We can then conclude that this counterexample shows that, even in the simplest one--dimensional setting, the constrained EnKF provides feasible solutions to the constrained optimization problem which do not collapse.
This consideration opens the question on guaranteeing that the collapse of the ensemble to the mean occurs, i.e.~the equilibria lie on the set where $\left\| \vec{e}^{j,\infty} \right\| = 0$, for $j=1,\dots,J$.

A sufficient condition for the existence of a monotonic decay to a bound for the ensemble spread is guaranteed by the following result, which holds in the space of the control.

\begin{proposition} \label{th:spread}
	Let $\vec{u}^{j,0}\in\R^d$, $j=1,\dots,J$, be an initial condition of the DAE system~\eqref{eq:limitDAElinearG} such that $\mathcal{A}(\vec{u}^{j,0})=\vec{0}_{\R^m}$. Assume that $\mathcal{A} : \R^d \to \R^m$ is convex and quadratic. Then the quantity $\frac{1}{J} \sum_{j=1}^J \left\| \vec{e}^j(t) \right\|^2$ is decreasing in time and thus in particular we have
	$
	\frac{1}{J} \sum_{j=1}^J \left\| \vec{e}^j(t) \right\|^2 \leq \frac{1}{J} \sum_{j=1}^J \left\| \vec{e}^j(0) \right\|^2$, for $t \geq 0$, provided that $\vecsym{\lambda}^j(t) = \vecsym{\Lambda}(t) \geq 0$, $\forall\,j=1,\dots,J$ and $t>0$.
\end{proposition}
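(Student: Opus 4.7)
The plan is to show $V(t):=\tfrac{1}{J}\sum_{j=1}^J\|\vec e^j(t)\|^2$ is nonincreasing along~\eqref{eq:limitDAElinearG}; the claimed bound then follows by integration. Differentiating $V$ and substituting the spread evolution~\eqref{eq:ensembleSpread}, and using $\sum_j\vec e^j=\vec 0_{\R^d}$ to kill the $\bar{\vec s}$-contribution, I would obtain
\begin{equation*}
\dot V(t) = -\frac{2}{J}\sum_{j=1}^J (\vec e^j)^\intercal \vec C_{\vec u\vec u}\,\vec G^\intercal \vecsym\Gamma \vec G\, \vec e^j \;-\; \frac{2}{J}\sum_{j=1}^J (\vec e^j)^\intercal \vec C_{\vec u\vec u}\, \vec s^j,
\end{equation*}
where $\vec s^j:=\vec J_\mathcal A^\intercal(\vec u^j)\vecsym\lambda^j$. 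It then suffices to show each of the two sums is nonnegative.

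For the data-fit sum, I would factor $\vec C_{\vec u\vec u}=\tfrac{1}{J}\vec E\vec E^\intercal$ with $\vec E:=[\vec e^1,\dots,\vec e^J]\in\R^{d\times J}$ and recognise the sum as $\tfrac{1}{J}\operatorname{tr}\bigl((\vec E^\intercal\vec E)(\vec E^\intercal\vec G^\intercal\vecsym\Gamma\vec G\vec E)\bigr)$. Both $J\times J$ factors are symmetric positive semidefinite (the second because $\vecsym\Gamma$ is positive definite), and the trace of a product of two symmetric positive semidefinite matrices is nonnegative. For the constraint sum, I would use the hypothesis $\vecsym\lambda^j(t)=\vecsym\Lambda(t)\ge\vec 0$: the scalar function $\phi(\vec u):=\vecsym\Lambda^\intercal\mathcal A(\vec u)$ is then a nonnegative combination of convex functions, hence convex, with $\vec s^j=\nabla\phi(\vec u^j)$. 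Because the algebraic part of~\eqref{eq:limitDAElinearG} forces $\mathcal A(\vec u^j(t))=\vec 0_{\R^m}$ for every $j$, one has $\phi(\vec u^j)\equiv 0$, and the subgradient inequality for $\phi$ evaluated between pairs of ensemble members yields the \emph{pairwise} inequality
\begin{equation*}
(\vec u^j-\vec u^p)^\intercal\,\vec s^j \;\ge\; 0 \qquad \forall\,j,p\in\{1,\dots,J\},
\end{equation*}
which is strictly stronger than monotonicity of $\nabla\phi$.

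Expanding $\vec C_{\vec u\vec u}=\tfrac{1}{J}\sum_p\vec e^p(\vec e^p)^\intercal$ in the constraint sum and using $\sum_p\vec e^p=\vec 0$ twice to absorb the $\bar{\vec m}$-terms, one can rewrite
\begin{equation*}
\sum_{j=1}^J(\vec e^j)^\intercal\vec C_{\vec u\vec u}\,\vec s^j \;=\; -\frac{1}{J}\sum_{j,p=1}^{J}(\vec e^j\cdot\vec e^p)\,(\vec u^j-\vec u^p)^\intercal\vec s^j.
\end{equation*}
The main obstacle is the sign analysis of this bilinear form: the Gram weights $\vec e^j\cdot\vec e^p$ are indefinite, so the pairwise inequalities alone do not conclude termwise. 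I would symmetrise the double sum in $j\leftrightarrow p$ and apply the polarisation identity $2(\vec e^j\cdot\vec e^p)=\|\vec e^j\|^2+\|\vec e^p\|^2-\|\vec u^j-\vec u^p\|^2$; the piece proportional to $\|\vec u^j-\vec u^p\|^2$ has the correct sign as a product of two nonnegatives, while the remaining pieces must be controlled using $(\vec e^j)^\intercal\vec s^j\ge 0$ — itself obtained by averaging the pairwise inequality over $p$ — and the identity $\sum_p\vec e^p=\vec 0$. Once the sign of the constraint sum is secured, $\dot V\le 0$ follows and the bound $V(t)\le V(0)$ is an immediate integration on $[0,t]$.
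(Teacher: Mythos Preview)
Your treatment of the data--fit sum is correct and equivalent to the paper's: the identity $\sum_j(\vec e^j)^\intercal\vec C_{\vec u\vec u}\vec G^\intercal\vecsym\Gamma\vec G\,\vec e^j=\tfrac{1}{J}\operatorname{tr}\bigl((\vec E^\intercal\vec E)(\vec E^\intercal\vec G^\intercal\vecsym\Gamma\vec G\vec E)\bigr)\ge 0$ is just a repackaging of the eigen--expansion the paper uses.

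The genuine gap is in the constraint sum. Your polarisation route is, in fact, circular. After symmetrising you have
\[
\sum_{j}(\vec e^{j})^\intercal\vec C_{\vec u\vec u}\,\vec s^{j}
\;=\;-\frac{1}{2J}\sum_{j,p}(\vec e^{j}\!\cdot\!\vec e^{p})\,a_{jp},
\qquad a_{jp}:=(\vec u^{j}-\vec u^{p})^\intercal(\vec s^{j}-\vec s^{p})\ge 0.
\]
Inserting $2(\vec e^{j}\!\cdot\!\vec e^{p})=\|\vec e^{j}\|^{2}+\|\vec e^{p}\|^{2}-\|\vec u^{j}-\vec u^{p}\|^{2}$ produces a ``good'' piece $+\tfrac{1}{4J}\sum_{j,p}\|\vec u^{j}-\vec u^{p}\|^{2}a_{jp}$ and a ``bad'' piece $-\tfrac{1}{4J}\sum_{j,p}(\|\vec e^{j}\|^{2}+\|\vec e^{p}\|^{2})a_{jp}$ of the opposite sign. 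But since $\|\vec u^{j}-\vec u^{p}\|^{2}=\|\vec e^{j}\|^{2}+\|\vec e^{p}\|^{2}-2(\vec e^{j}\!\cdot\!\vec e^{p})$, the two pieces recombine \emph{exactly} to the starting expression: nothing has been gained, and the auxiliary facts you list ($(\vec e^{j})^\intercal\vec s^{j}\ge 0$, $\sum_p\vec e^{p}=\vec 0$) cannot break this tautology. The sentence ``the remaining pieces must be controlled'' is therefore not a plan but the whole difficulty.

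The paper handles this term differently. Using $\vecsym\lambda^{j}\equiv\vecsym\Lambda$ and $\sum_j\vec e^{j}=\vec 0$ it rewrites the constraint contribution as $-\tfrac{\Lambda}{J}\sum_j\langle\vec e^{j},\vec C_{\vec u\vec u}\nabla_{\vec u}\mathcal A(\vec e^{j})\rangle$ and declares it nonpositive by convexity. That passage is really the statement that $\nabla_{\vec u}\mathcal A$ is \emph{affine}: writing $\nabla_{\vec u}\mathcal A(\vec u)=\vec H\vec u+\vec b$ with $\vec H=\nabla^2\mathcal A\succeq 0$ (the quadratic convex case used throughout the paper), one gets $\nabla_{\vec u}\mathcal A(\vec u^{j})-\overline{\nabla_{\vec u}\mathcal A}=\vec H\vec e^{j}$ and hence
\[
-\frac{\Lambda}{J}\sum_j(\vec e^{j})^\intercal\vec C_{\vec u\vec u}\vec H\,\vec e^{j}
=-\frac{\Lambda}{J^2}\operatorname{tr}\bigl((\vec E^\intercal\vec E)(\vec E^\intercal\vec H\vec E)\bigr)\le 0,
\]
by precisely your trace--of--two--PSD argument with $\vec H$ in place of $\vec G^\intercal\vecsym\Gamma\vec G$. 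Replacing your pairwise--inequality route by this reduction closes the proof cleanly for convex quadratic $\mathcal A$; for a general (non--quadratic) convex constraint neither your outline nor the paper's one--line assertion provides a complete argument.
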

\begin{proof}
	For the sake of simplicity we consider a scalar constraint, i.e.~$m=1$. To prove the statement, it is sufficient to show that $\frac12 \frac{\mathrm{d}}{\mathrm{d}t} \frac{1}{J} \sum_{j=1}^J \left\| \vec{e}^j(t) \right\|^2 \leq 0$. Using~\eqref{eq:ensembleSpread}, we compute
	\begin{align*}
	\frac12 \frac{\mathrm{d}}{\mathrm{d}t} \frac{1}{J} \sum_{j=1}^J \left\| \vec{e}^j(t) \right\|^2 =& \frac{1}{J} \sum_{j=1}^J \left\langle \vec{e}^j(t) , \frac{\mathrm{d}}{\mathrm{d}t} \vec{e}^j(t) \right\rangle \\
	=& - \frac{1}{J} \sum_{j=1}^J \left\langle \vec{e}^j(t) , \vec{C}_{\vec{u}\vec{u}} \vec{G}^\intercal \vecsym{\Gamma} \vec{G} \vec{e}^j(t) \right\rangle \\
	&- \frac{1}{J} \sum_{j=1}^J \left\langle \vec{e}^j(t) , \vec{C}_{\vec{u}\vec{u}} \left( \lambda^j \nabla_{\vec{u}} \mathcal{A}(\vec{u}^j) - \frac{1}{J} \sum_{\ell=1}^J \lambda^\ell \nabla_{\vec{u}} \mathcal{A}(\vec{u}^\ell) \right) \right\rangle.
	\end{align*}
	Let us consider the first term in the right hand side. Using the structure of $\vec{C}_{\vec{u}\vec{u}}$ we have
	$$
	- \frac{1}{J} \sum_{j=1}^J \left\langle \vec{e}^j(t) , \vec{C}_{\vec{u}\vec{u}} \vec{G}^\intercal \vecsym{\Gamma} \vec{G} \vec{e}^j(t) \right\rangle = - \frac{1}{J^2} \sum_{k,j=1}^J \left\langle \vec{e}^j(t) , \vec{e}^k(t) \right\rangle \left\langle \vec{e}^k(t) , \vec{G}^\intercal \vecsym{\Gamma} \vec{G} \vec{e}^j(t) \right\rangle.
	$$
	Since $\vec{G}^\intercal \vecsym{\Gamma} \vec{G}$ is symmetric and positive semidefinite, it is possible to find a set of eigenpairs $(\mu^i,\vec{v}^i)_{i=1}^d$ with $\mu^i \geq 0$ and $\vec{v}^i$ orthonormal basis such that $\vec{e}^j = \sum_{i=1}^d \alpha^{i_j} \vec{v}^i$, $\forall\,j=1,\dots,J$, and $\vec{G}^\intercal \vecsym{\Gamma} \vec{G} \vec{v}^i = \mu^i \vec{v}^i$, for all $i=1,\dots,d$. Then
	$$
	- \frac{1}{J^2} \sum_{k,j=1}^J \left\langle \vec{e}^j(t) , \vec{e}^k(t) \right\rangle \left\langle \vec{e}^k(t) , \vec{G}^\intercal \vecsym{\Gamma} \vec{G} \vec{e}^j(t) \right\rangle = - \frac{1}{J^2} \sum_{k,j=1}^J \left( \sum_{i=1}^d \sqrt{\mu^i} \alpha^{i_k} \alpha^{i_j} \right)^2 \leq 0.
	$$
	For the second term in the right hand side, using the assumption $\lambda^j(t) = \Lambda(t) \geq 0$ and the convexity of the constraint, we obtain
	$$
	- \frac{1}{J} \sum_{j=1}^J \left\langle \vec{e}^j(t) , \vec{C}_{\vec{u}\vec{u}} \left( \lambda^j \nabla_{\vec{u}} \mathcal{A}(\vec{u}^j) - \frac{1}{J} \sum_{\ell=1}^J \lambda^\ell \nabla_{\vec{u}} \mathcal{A}(\vec{u}^\ell) \right) \right\rangle = - \frac{\Lambda}{J} \sum_{j=1}^J \left\langle \vec{e}^j(t) , \vec{C}_{\vec{u}\vec{u}} \nabla_{\vec{u}} \mathcal{A}(\vec{e}^j) \right\rangle \leq 0.
	$$\qedhere
\end{proof}

Under stronger assumptions, it is possible to prove convergence of the residual in the control space.

\begin{proposition}
	Let $\vec{u}^{j,0}\in\R^d$, $j=1,\dots,J$, be an initial condition of the DAE system~(18) such that $\mathcal{A}(\vec{u}^{j,0})=\vec{0}_{\R^m}$. Assume that $\mathcal{A}(\vec{u}) = \vec{0}_{\R^m}$ is a convex feasible domain and $\mathcal{A} : \R^d \to \R^m$ is convex and quadratic. Assume that $\vec{G}^\intercal \vecsym{\Gamma} \vec{G}$ is positive definite. Then $\lim_{t\to\infty} \frac{1}{J} \sum_{j=1}^J \left\| \vec{r}^j(t) \right\|^2 = 0$, provided that $\vecsym{\lambda}^j(t) = \vecsym{\Lambda}(t) \geq 0$, $\forall\,j=1,\dots,J$ and $t>0$.
\end{proposition}
\begin{proof}
	By assumption $\vec{G}^\intercal \vecsym{\Gamma} \vec{G}$ is positive definite and thus we have a unique global minimizer $\vec{u}^*$ of the constrained minimization problem
	$$\min_{\vec{u}\in\R^d} \Phi(\vec{u},\vec{y}) \ \mbox{ subject to } \ \mathcal{A}(\vec{u}) = \vec{0}_{\R^m}$$
	for a given $\vec{y}\in\R^K$.
	For simplicity and without loss of generality we consider the case of a scalar constraint, i.e.~$m=1$. We study
	\begin{align*}
	\frac{\mathrm{d}}{\mathrm{d}t} \frac{1}{2 J} \sum_{j=1}^J \left\| \vec{r}^j(t) \right\|^2 =& \frac{\mathrm{d}}{\mathrm{d}t} \frac{1}{2 J} \sum_{j=1}^J \left\| \vec{u}^j(t) - \vec{u}^* \right\|^2 = \frac{1}{J} \sum_{j=1}^J \left\langle \vec{u}^j(t) - \vec{u}^* , \frac{\mathrm{d}}{\mathrm{d}t} \vec{u}^j(t) \right\rangle \\
	=& - \frac{1}{J} \sum_{j=1}^J \left\langle \vec{u}^j(t) - \vec{u}^* , \vec{C}_{\vec{u}\vec{u}} \left( \nabla_\vec{u}(\vec{u}^j,\vec{y}) + \lambda^j \nabla_{\vec{u}} \mathcal{A}(\vec{u})|_{\vec{u}=\vec{u}^j(t)} \right) \right\rangle \\
	=& - \frac{1}{J} \sum_{j=1}^J \left\langle \vec{u}^j - \vec{u}^* , \vec{C}_{\vec{u}\vec{u}} \vec{G}^\intercal \vecsym{\Gamma} \vec{G} (\vec{u}^j-\vec{u}^*) \right\rangle \\
	&- \frac{\Lambda}{J} \sum_{j=1}^J \left\langle \vec{u}^j - \vec{u}^* , \lambda^j \vec{C}_{\vec{u}\vec{u}} \nabla_{\vec{u}}\left(\mathcal{A}(\vec{u})|_{\vec{u}=\vec{u}^j} - \mathcal{A}(\vec{u}^*) \right) \right\rangle \\
	&< 0.
	\end{align*}\qedhere
\end{proof}

The assumptions that $\vec{G}^\intercal \vecsym{\Gamma} \vec{G}$ and the empirical covariance $\vec{C}_{\vec{u}\vec{u}}$ are positive definite are strong and in general not satisfied. To deal with the positive definiteness of $\vec{C}_{\vec{u}\vec{u}}$ usually a constant or time--dependent inflation of the covariance is considered~\cite{SchillingsPreprint,ChadaShillingsWeissmann2019,TongMajdaStuart2015}.

\subsection{Continuous limit in the number of ensembles} \label{sec:meanfieldLinConstr}

Typically, the EnKF method is applied for a fixed and finite ensemble size. It is clear that the computational and memory cost of the method increases with the number of the ensembles, but there is a substantial gain in accuracy. The analysis of the method was also studied in the large ensemble limit, see e.g.~\cite{CarrilloVaes,DingLi2019,ernstetal2015,Stuart2019MFEnKF,HertyVisconti2019,kwiatkowskimandel2015,lawtembinetempone2016,leglandmonbettran2009}. In this limit a slow down of the ensemble spread has been observed. In this section, we derive the corresponding mean--field limit of the continuous time equation~\eqref{eq:limitDAElinearG} and provide an analysis of the resulting PDE equation.

We follow the classical formal derivation to formulate a mean--field equation of a particle system, see~\cite{CarrilloFornasierToscaniVecil2010,hatadmor2008,PareschiToscaniBOOK,Toscani2006}. Let us denote by
\begin{equation} \label{eq:kineticf-ulambda}
f = f(t,\vec{u},\vecsym{\lambda}) : \R^+ \times \R^d \times \R^m \to \R^+
\end{equation}
the compactly supported on $\R^d\times \R^m$ probability density of the pair $(\vec{u},\vecsym{\lambda})$ at time $t$ and introduce the following moments of $f$ at time $t$ with respect to $\vec{u}$ and $\vecsym{\lambda}$, respectively, as
\begin{equation} \label{eq:moments-ulambda}
\begin{bmatrix} \vec{m}_1(t) \\ \vecsym{\Lambda}_1(t) \end{bmatrix} = \iint_{\R^d\times\R^m} \begin{bmatrix} \vec{u} \\ \vecsym{\lambda} \end{bmatrix} f(t,\vec{u},\vecsym{\lambda}) \mathrm{d}\vec{u} \mathrm{d}\vecsym{\lambda}, \quad
\begin{bmatrix} \vec{m}_2(t) \\ \vecsym{\Lambda}_2(t) \end{bmatrix} = \iint_{\R^d\times\R^m} \begin{bmatrix} \vec{u} \otimes \vec{u} \\ \vecsym{\lambda} \otimes \vecsym{\lambda} \end{bmatrix} f(t,\vec{u},\vecsym{\lambda}) \mathrm{d}\vec{u} \mathrm{d}\vecsym{\lambda}
\end{equation}

Since $(\vec{u},\vecsym{\lambda})\in\R^d\times\R^m$, the empirical measure is given by
\begin{equation} \label{eq:empiricalf}
	f(t,\vec{u},\vecsym{\lambda}) = \frac{1}J \sum_{j=1}^J \delta(\vec{u}^j - \vec{u}) \delta(\vecsym{\lambda}^j - \vecsym{\lambda}).
\end{equation}
This formulation allows for a representation of the covariance operator~\eqref{eq:covariance} as 
\begin{align*}
	\vecsym{\C}(t) &= \iint_{\R^d\times\R^m} \vec{u} \otimes \vec{u} f(t,\vec{u},\vecsym{\lambda}) \mathrm{d}\vec{u} \mathrm{d}\vecsym{\lambda} - \iint_{\R^d\times\R^m} \vec{u} f(t,\vec{u},\vecsym{\lambda}) \mathrm{d}\vec{u} \mathrm{d}\vecsym{\lambda} \iint_{\R^d\times\R^m} \vec{u} f(t,\vec{u},\vecsym{\lambda}) \mathrm{d}\vec{u} \mathrm{d}\vecsym{\lambda} \\
	& = \vec{m}_2(t) - \vec{m}_1(t) \otimes \vec{m}_1(t) \geq 0.
\end{align*}
Let us denote $\varphi(\vec{u},\vecsym{\lambda}) \in C_0^\infty(\R^d\times\R^m)$ a test function. We compute
\begin{align*}
\frac{\mathrm{d}}{\mathrm{d}t} \left\langle f , \varphi \right\rangle &= \frac{\mathrm{d}}{\mathrm{d}t} \int_{\R^d} \frac{1}{J} \sum_{j=1}^J \delta(\vec{u} - \vec{u}^j) \delta(\vecsym{\lambda}^j - \vecsym{\lambda}) \varphi(\vec{u},\vecsym{\lambda}) \mathrm{d}\vec{u} \mathrm{d}\vecsym{\lambda} \\
&= - \frac{1}{J} \sum_{j=1}^J \nabla_\vec{u} \varphi(\vec{u}^j,\vecsym{\lambda}^j) \cdot \vecsym{\C} \left( \nabla_\vec{u} \Phi(\vec{u}^j,\vec{y}) + \vec{J}_\mathcal{A}^\intercal(\vec{u}^j) \vecsym{\lambda}^j \right) \\
&= - \iint_{\R^d\times\R^m} \nabla_\vec{u} \varphi(\vec{u},\vecsym{\lambda}) \cdot \vecsym{\C} \left( \nabla_\vec{u} \Phi(\vec{u},\vec{y}) + \vec{J}_\mathcal{A}^\intercal(\vec{u}) \vecsym{\lambda} \right) f(t,\vec{u}) \mathrm{d}\vec{u} \mathrm{d}\vecsym{\lambda}
\end{align*}
coupled to the mean--field of the algebraic constraint. Finally the mean--field kinetic equation corresponding to the DAE system~\eqref{eq:limitDAElinearG} reads:
\begin{equation} \label{eq:kineticWithConstr}
\begin{aligned}
\partial_t f(t,\vec{u},\vecsym{\lambda}) - \nabla_{\vec{u}} \cdot \left[ \vecsym{\C}(t) ( \nabla_{\vec{u}} \Phi(\vec{u},\vec{y}) + \vec{J}_\mathcal{A}^\intercal(\vec{u}) \vecsym{\lambda} ) f(t,\vec{u},\vecsym{\lambda}) \right] &= 0 \\
\iint_{\R^d\times\R^m} \mathcal{A}(\vec{u}) f(t,\vec{u},\vecsym{\lambda}) \mathrm{d}\vec{u} \mathrm{d}\vec{\lambda} &= \vec{0}_{\mathbb{R}^m}.
\end{aligned}
\end{equation}

\begin{proposition} \label{th:kineticSol}
	Let $f(t,\vec{u},\vecsym{\lambda})$ be a solution in distributional sense of the mean--field equation~\eqref{eq:kineticWithConstr} at $t>0$ for compactly supported initial probability distribution $f(t=0,\vec{u},\vecsym{\lambda})$. Then, $f(\vec{u},\vecsym{\lambda}) = \delta(\vec{u}-\vec{v}) \delta(\vecsym{\lambda}-\vecsym{\mu})$ is a steady state solution in distributional sense of~\eqref{eq:kineticWithConstr} provided that either $\vecsym{\C} = \vec{0}_{\mathbb{R}^{d\times d}}$ or $(\vec{v},\vecsym{\mu})$ is a KKT point of the minimization problem
	$$\min_{\vec{u}\in\R^d} \Phi(\vec{u},\vec{y}) \ \mbox{ subject to } \ \mathcal{A}(\vec{u}) = \vec{0}_{\R^m}.$$
\end{proposition}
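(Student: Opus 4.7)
The plan is to substitute the Dirac ansatz $f(\vec{u},\vecsym{\lambda}) = \delta(\vec{u}-\vec{v})\delta(\vecsym{\lambda}-\vecsym{\mu})$ directly into the two equations of~\eqref{eq:kineticWithConstr}, interpreted in the distributional sense. Since the ansatz has no time dependence, $\partial_t f = 0$ trivially, so the only nontrivial requirement from the transport equation is that the divergence term acts as the zero distribution.

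To make this precise, I would test the divergence against an arbitrary $\varphi\in C_0^\infty(\R^d\times\R^m)$. Integration by parts in $\vec{u}$, followed by the sifting property of the two Dirac masses, yields
$$\left\langle \nabla_\vec{u} \cdot \left[ \vecsym{\C}\left( \nabla_\vec{u}\Phi(\vec{u},\vec{y}) + \vec{J}_\mathcal{A}^\intercal(\vec{u})\vecsym{\lambda} \right) f \right], \varphi \right\rangle = -\nabla_\vec{u}\varphi(\vec{v},\vecsym{\mu}) \cdot \vecsym{\C}\left( \nabla_\vec{u}\Phi(\vec{v},\vec{y}) + \vec{J}_\mathcal{A}^\intercal(\vec{v})\vecsym{\mu} \right).$$
For this to vanish for every admissible $\varphi$ (whose $\vec{u}$--gradient at $(\vec{v},\vecsym{\mu})$ is arbitrary in $\R^d$), one needs either $\vecsym{\C} = \vec{0}_{\R^{d\times d}}$ or the vector $\nabla_\vec{u}\Phi(\vec{v},\vec{y}) + \vec{J}_\mathcal{A}^\intercal(\vec{v})\vecsym{\mu} = \vec{0}_{\R^d}$.

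Next I would process the algebraic constraint in~\eqref{eq:kineticWithConstr} under the same ansatz: the double integral collapses immediately to $\mathcal{A}(\vec{v}) = \vec{0}_{\R^m}$. Combined with the stationarity identity from the previous step, these are exactly the KKT first order necessary optimality conditions for the Lagrangian $\mathfrak{L}(\vec{u},\vecsym{\lambda}) = \Phi(\vec{u},\vec{y}) + \vecsym{\lambda}^\intercal \mathcal{A}(\vec{u})$ associated with the minimization problem $\min_\vec{u}\Phi(\vec{u},\vec{y})$ subject to $\mathcal{A}(\vec{u})=\vec{0}_{\R^m}$, which gives the second alternative.

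The main subtlety, rather than a genuine obstacle, is the self--consistent nature of $\vecsym{\C}$ in~\eqref{eq:kineticWithConstr}: for a product of Dirac masses one has $\vec{m}_1 = \vec{v}$ and $\vec{m}_2 = \vec{v}\otimes\vec{v}$, so $\vecsym{\C} = \vec{m}_2 - \vec{m}_1\otimes\vec{m}_1 = \vec{0}_{\R^{d\times d}}$ automatically, and the first alternative is always attained on the ansatz itself. I would therefore emphasize that the KKT alternative is the physically meaningful sufficient condition, arising when $\vecsym{\C}$ is interpreted as the limiting covariance inherited from an approximating sequence of empirical measures concentrating on $(\vec{v},\vecsym{\mu})$, consistent with the finite--ensemble analysis in Proposition~\ref{th:ensembleKKT}.
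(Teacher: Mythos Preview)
Your proof is correct and follows essentially the same approach as the paper: write the weak stationary conditions against $\varphi\in C_0^\infty(\R^d\times\R^m)$, substitute the Dirac ansatz, and read off the two alternatives from the transport term and the algebraic constraint respectively. Your closing remark on the self--consistency of $\vecsym{\C}$ (that it vanishes automatically when evaluated on a single Dirac mass) is a valid observation the paper does not make explicit; the paper's discussion immediately after the proposition implicitly treats $\vecsym{\C}$ as inherited from the approximating ensemble dynamics rather than recomputed on the steady state itself, which is consistent with your interpretation.
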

\begin{proof}
	Let $\varphi(\vec{u},\vecsym{\lambda}) \in C_0^\infty(\R^d\times\R^m)$ be a test function. Then, weak steady state solutions, say $f^\infty(\vec{u},\vecsym{\lambda})$, to~\eqref{eq:kineticWithConstr} satisfy formally
	\begin{align*}
		\iint_{\R^d\times\R^m} \nabla_{\vec{u}} \varphi(\vec{u},\vecsym{\lambda}) \cdot \vecsym{\C} \left( \nabla_{\vec{u}} \Phi(\vec{u},\vec{y}) + \vec{J}_\mathcal{A}^\intercal(\vec{u}) \vecsym{\lambda} )  \right) \mathrm{d}f^\infty(\vec{u},\vecsym{\lambda}) &= 0 \\
		\iint_{\R^d\times\R^m} \varphi(\vec{u},\vecsym{\lambda}) \mathcal{A}(\vec{u}) \mathrm{d}f^\infty(\vec{u},\vecsym{\lambda}) &= \vec{0}_{\mathbb{R}^m}
	\end{align*}
	Substituting $f^\infty$ with $f(\vec{u},\vecsym{\lambda}) = \delta(\vec{u}-\vec{v}) \delta(\vecsym{\lambda}-\vecsym{\mu})$, we easily obtain that the above conditions are satisfied when either $\vecsym{\C} = \vec{0}_{\mathbb{R}^{d\times d}}$ or $\nabla_{\vec{u}} \Phi(\vec{u},\vec{y})|_{\vec{u}=\vec{v}} = - \left( \vec{J}_\mathcal{A}^\intercal(\vec{u}) \vecsym{\lambda} \right)|_{\vec{u}=\vec{v},\vecsym{\lambda}=\vecsym{\mu}}$ and simultaneously $\mathcal{A}(\vec{u})|_{\vec{u}=\vec{v}} = \vec{0}_{\mathbb{R}^m}$.
\end{proof}

The previous proposition states that, as in the discrete case, not all the steady states are in the kernel of $\vecsym{\C}$ due to the presence of the multipliers.

It is clear that the mean--field equation~\eqref{eq:kineticWithConstr} does not provide a closed differential system of the moments. In order to show an energy decay estimate, we perform a preliminary moment analysis in the simple setting of a one--dimensional control and scalar constraint. Thus, similarly to the previous section, we assume that $(u,\lambda) \in \R\times\R$ and consider a strictly convex quadratic constraint $\mathcal{A}(u) \in \R$. We write
$$
	\nabla_u \Phi(u,y) = b_1 u + b_2, \quad \nabla_u \mathcal{A}(u) = h_1 u + h_2, \ h_1>0.
$$
Then by the second equation in~\eqref{eq:kineticWithConstr} we have
$$
	0 = \iint_{\R^2} \left( \frac12 h_1 u^2 f(t,u,\lambda) + h_2 u f(t,u,\lambda) \right) \mathrm{d}u \mathrm{d}\lambda = \frac12 h_1 m_2(t) + h_2 m_1(t)
$$
which implies the following link between the first and the second moment due to the constraint:
\begin{equation} \label{eq:m2}
	m_2(t) = - 2 \frac{h_2}{h_1} m_1(t).
\end{equation}
Then, it suffices to study the evolution of the first moment in order to obtain information on the evolution of the second moment. Using the first equation in~\eqref{eq:kineticWithConstr} we derive the following evolution equation for the first moment:
\begin{equation} \label{eq:m1}
	\frac{\mathrm{d}}{\mathrm{d}t} m_1(t) = - C \iint_{\R^2} \nabla_u \Phi(u,y) f(t,u,\lambda) \mathrm{d}u \mathrm{d}\lambda - C \iint_{\R^2} \left( h_1 u \lambda f(t,u,\lambda) + h_2 \lambda f(t,u,\lambda) \right) \mathrm{d}u \mathrm{d}\lambda.
\end{equation}
The evolution of the multiplier is computed from~\eqref{eq:kineticWithConstr} obtaining for each $k\geq0$
$$
	\frac{\mathrm{d}}{\mathrm{d}t} \iint_{\R^2} \lambda^k f(t,u,\lambda) \mathrm{d}u \mathrm{d}\lambda = 0 \ \Rightarrow \ \iint_{\R^2} \lambda^k f(t,u,\lambda) \mathrm{d}u \mathrm{d}\lambda = \iint_{\R^2} \lambda^k f_0(u,\lambda) \mathrm{d}u \mathrm{d}\lambda =: \Lambda_k.
$$
For an arbitrary small positive quantity $\epsilon$ we have
$$
	\iint_{\R^2} (\sqrt{\epsilon} u) \frac{\lambda}{\sqrt{\epsilon}} f(t,u,\lambda) \mathrm{d}u \mathrm{d}\lambda \leq \frac\epsilon2 m_2(t) + \frac{1}{2\epsilon} \Lambda_2.
$$
Using the above relation and~\eqref{eq:m2}, from~\eqref{eq:m1} we obtain the following bound for the evolution equation of the first moment:
$$
	\frac{\mathrm{d}}{\mathrm{d}t} m_1(t) \leq L_\epsilon \C(t) \left( m_1(t) + \frac{K_\epsilon}{L_\epsilon} \right), \quad L_\epsilon= \epsilon h_2 - b_1, \quad K_\epsilon = -b_2 - \frac{h_1}{2\epsilon} \Lambda_2 - h_2 \Lambda_1
$$
with $L_\epsilon<0$ for $\epsilon$ sufficiently small. Defining $\tilde{m}_1(t) = m_1(t) + \frac{K_\epsilon}{L_\epsilon}$, by application of the Gronwall lemma we have
$$
	\frac{\mathrm{d}}{\mathrm{d}t} \tilde{m}_1(t) \leq L_\epsilon \C(t) \tilde{m}_1(t) \ \Rightarrow \ \tilde{m}_1(t) \leq \tilde{m}_1(0) \exp\left(L_\epsilon \int_0^t \C(\tau) \mathrm{d}\tau\right) \to 0.
$$
Finally,
$$
	m_1(t) \leq - \frac{K_\epsilon}{L_\epsilon} = \left| \frac{K_\epsilon}{L_\epsilon} \right|
$$
and the mean is hence bounded. Consequently, also the second moment $m_2$ is bounded by the relation~\eqref{eq:m2}. This shows that, according to Proposition~\ref{th:kineticSol}, not necessarily all the steady states are Dirac distributions on $\R^d\times\R^m$.

\section{Numerical experiments} \label{sec:numerics}

The simulations are performed for the case of linear models $\vec{G}$ by solving the system of DAEs~\eqref{eq:limitDAElinearG}. The details on the scheme and on the setting of the experiments are as follows. Many numerical methods for DAEs are known and based on Runge--Kutta and BDF methods, see e.g.~the monographs~\cite{BrenanBook1996,HairerWanner1996}. Here, for the sake of simplicity, we employ straightforwardly first order implicit--explicit (IMEX) time integration coupled to Newton's method. The IMEX  
discretization of~\eqref{eq:limitDAElinearG} with time step $\Delta t$ then reads as
\begin{equation} \label{eq:generalIMEX}
\begin{aligned}
	\vec{u}_j^{n+1} &= \vec{u}_j^{n} - \Delta t \vec{C}_{\vec{u}\vec{u}}^n \nabla_{\vec{u}} \Phi(\vec{u}_j^n,\vec{y}) - \Delta t \vec{C}_{\vec{u}\vec{u}}^{n} \vec{J}_\mathcal{A}(\vec{u}_j^{n+1}) \vecsym{\lambda}_j^{n+1} \\
	\vec{0}_{\R^m} &= \mathcal{A}(\vec{u}_j^{n+1}).
\end{aligned}
\end{equation}
This represents a possible high dimensional system of nonlinear equations for $\vec{u}_j^{n+1} \in \R^d$ and $\vecsym{\lambda}_j^{n+1} \in \R^m$, for $j=1,\dots,J$, to be solved at each time integration step $n$. Inspired by the derivation of~\eqref{eq:limitDAElinearG} we consider the covariance matrix $\vec{C}_{\vec{u}\vec{u}}$ computed explicitly at time level $n$. This is motivated by the fact that $\vec{C}_{\vec{u}\vec{u}}$ can be written in terms of moments. Observe that then the IMEX discretization is~\eqref{eq:limitDAElinearG} in the limit $\Delta t \to 0^+$. 

The differential point of view allows us to employ an adaptive time step to avoid stability issues. In the numerical experiments we use
$$
	\Delta t \leq \frac{1}{\max_{i}\left(|(\Re(\mu_i)|\right)}
$$
where the $\mu_i$'s are the eigenvalues of $\vec{C}_{\vec{u}\vec{u}}^n \vec{G}^\intercal \vecsym{\Gamma} \vec{G}$. As we observe that $\vec{C}_{\vec{u}\vec{u}}^n \vec{G}^\intercal \vecsym{\Gamma} \vec{G}$ is characterized by large spectral radius at initial time that reduces over time, the adaptive computation of $\Delta t$ allows also to reach equilibrium in less time steps compared to the choice of a fixed $\Delta t$.

We are interested to solve the inverse problem described in Section~\ref{sec:linearEqConstr} aimed to find the force function of the linear elliptic equation in~\eqref{eq:ellipticEq}. This is a typical example used in the mathematical literature to test the property of the ensemble Kalman filter~\cite{Stuart2019CEnKF,HertyVisconti2019,iglesiaslawstuart2013,schillingsstuart2017}. We consider the same setup as in Section~\ref{sec:linearEqConstr}, but in this case we take into account a nonlinear constraint. In particular, we focus on a scalar ($m=1$) quadratic constraint $\mathcal{A} : \R^d \to \R$ of the form $\mathcal{A}(\vec{u}) = \frac12 \vec{u}^\intercal \vec{A} \vec{u} + \vec{u}^\intercal \vec{b}$, with $\vec{A}\in\R^{d\times d}$ and $\vec{b}\in\R^d$ given. In this setting, the gradient of $\mathcal{A}$ is linear, $\nabla_{\vec{u}} \mathcal{A} = \vec{A} \vec{u} + \vec{b}$. Therefore, \eqref{eq:generalIMEX} can be explicitly solved by $\vec{u}_j^{n+1}$ obtaining
\begin{equation} \label{eq:collectImplicitTerm}
	\vec{u}_j^{n+1} = \left( \vec{I} + \Delta t \lambda_j^{n+1} \vec{C}_{\vec{u}\vec{u}}^n \right)^{-1} \left( \vec{u}_j^n - \Delta t \vec{C}_{\vec{u}\vec{u}} \nabla_{\vec{u}} \Phi(\vec{u}_j^n,\vec{y}) - \Delta t \lambda_j^{n+1} \vec{C}_{\vec{u}\vec{u}} \vec{b} \right),
\end{equation}
where $\vec{I}\in\R^{d\times d}$ is the identity matrix.
Then, the scalar multiplier $\lambda_j^{n+1}$ is computed by solving the nonlinear equation $\mathcal{A}(\vec{u}_j^{n+1})=0$ for each $j=1,\dots,J$ at each integration step. The multipliers are finally inserted into~\eqref{eq:collectImplicitTerm} to determine the final update of the feasible ensembles at time level $n+1$.

In each example the exact force function is chosen as $u(x) = \sin(\pi x)$ and then suitable modified in order to satisfy the constraint. Also, the initial condition $\vec{u}_j(0) = \vec{u}_j^0$ on the ensembles satisfies the constraints. The vector $\vec{b}$, characterizing the linear term in the constraint, is always randomly sampled from a Gaussian distribution with zero mean and standard deviation $\frac12$.

Information on the simulation results is presented in the following norms
\begin{equation} \label{eq:normErr}
E(t) = \frac{1}{J} \sum_{j=1}^J \| \vec{e}^j(t) \|^2, \quad R(t) = \frac{1}{J} \sum_{j=1}^J \| \vec{r}^j(t) \|^2
\end{equation}
at each iteration, where the quantities $\vec{e}^j(t)$ and $\vec{r}^j(t)$ are the spread and the residual of the ensembles, respectively, as defined by~\eqref{eq:spreadresidual}. The residual is measured by taking $\vec{u}^*$ as the truth solution $\vec{u}^\dagger$.

Another additional important quantities is given by the misfit which allows to measure the quality of the solution at each iteration. The misfit for the $j$-th sample is defined as
\begin{equation} \label{eq:singleMisfit}
\vecsym{\vartheta}^j(t) = \vec{G} \vec{r}^j(t) - \vecsym{\eta},
\end{equation}
where $\vec{G}$ is again the finite difference discretization of the continuous linear operator defining the elliptic PDE~\eqref{eq:ellipticEq}.
By using~\eqref{eq:singleMisfit} we finally look at
\begin{equation} \label{eq:misfit}
\vartheta(t) = \frac{1}{J} \sum_{j=1}^J \| \vecsym{\vartheta}^j(t) \|^2.
\end{equation}
Driving this quantity to zero leads to over--fitting of the solution. For this reason, usually it is suitable introducing a stopping criterion which avoids this effect. In the following we will consider the discrepancy principle which check and stop the simulation if the condition $\vartheta \leq \| \vecsym{\eta} \|^2$ is satisfied, with $\vecsym{\eta}$ measurement noise as described in the problem setup in Section~\ref{sec:linearEqConstr}.

\subsection{Quadratic convex constraint} \label{sec:convexExample}

The first situation we consider is the case of a strictly convex constraint by taking $\vec{A} = \vec{I} \in \R^{d\times d}$, i.e.,
$$
	\mathcal{A}(\vec{u}) = \sum_{k=1}^d \vec{u}_{(k)}^2 + \vec{b}_{(k)} \vec{u}_{(k)}.
$$
In order to allow the force function $u(x)$ to satisfy the constraint, we artificially modify it on the grid introduced on the domain $[0,\pi]$. In particular, we determine $\{ \vec{u}_{(k)} \}_{k=1}^{\frac{d}{2}}$ in such a way $\vec{u}_{(k)}^2 + \vec{b}_{(k)} \vec{u}_{(k)} = - \vec{u}_{(d-k+1)}^2 - \vec{b}_{(d-k+1)} \vec{u}_{(d-k+1)}$, for $k=1,\dots,\frac{d}{2}$. The initial ensembles are sampled from a multivariate normal distribution and then also artificially modified to satisfy the constraint. We consider the same setup as in Section~\ref{sec:linearEqConstr}, thus a noise level $\gamma = 0.01$, $d=K=2^8$.

\begin{figure}[t!]
	\centering
	\includegraphics[width=\textwidth]{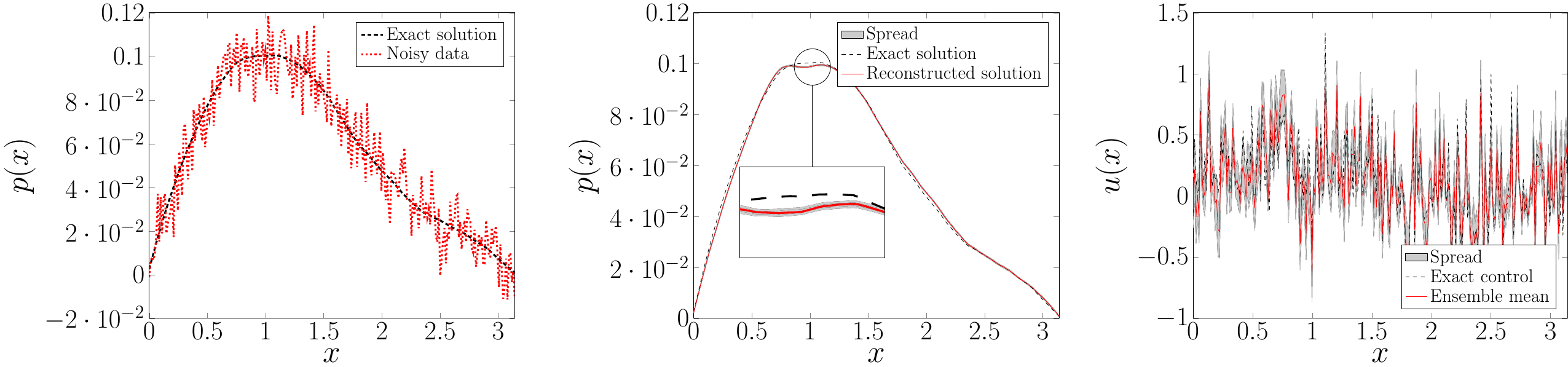}
	\caption{Solution provided by the constrained ensemble Kalman filter using $J=160$ ensembles on the strictly convex nonlinear constraint.\label{fig:convexCEnKF}}
\end{figure}

In Figure~\ref{fig:convexCEnKF} we compute the solution obtained with $J=160$ ensembles. The left plot shows the solution $p(x)$ of the PDE~\eqref{eq:ellipticEq} and noisy observations. The reconstruction of $p(x)$ is provided in the center panel and it is obtained by using the mean of the ensembles represented in the right panel. The gray areas gives information on the spread due to the ensembles. The constrained ensemble Kalman filter accurately reconstructs both the control and its projection through the PDE model. 

\begin{figure}[t!]
	\centering
	\includegraphics[width=\textwidth]{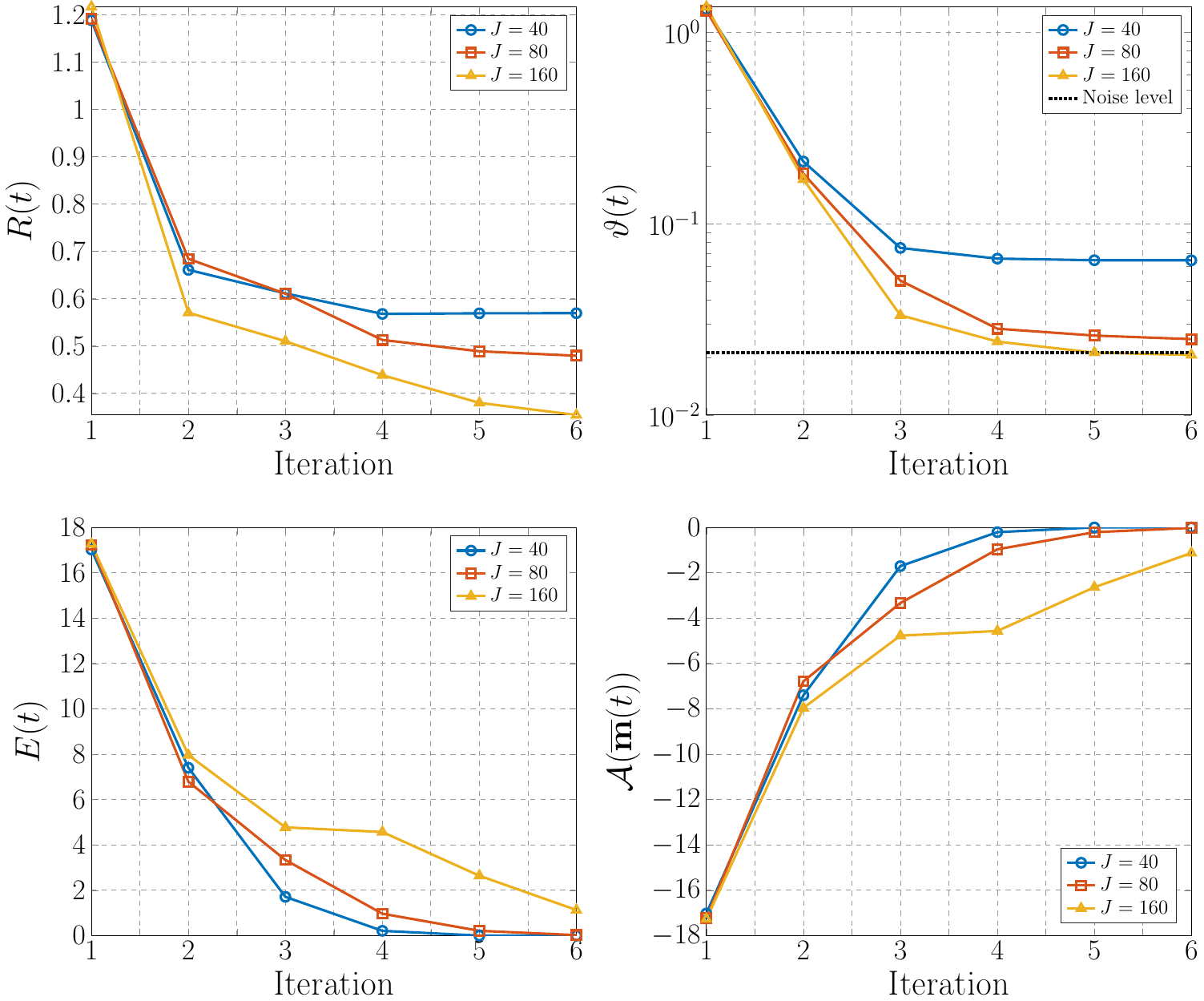}
	\caption{Time evolution of the residual (top left), the misfit (top right), the ensemble collapse (bottom left) and the value of the constraint computed on the ensemble mean (bottom right) for the strictly convex nonlinear constraint.\label{fig:convexAnalysis}}
\end{figure}

The analysis of the solution is performed with three values of the ensemble size $J\in\{40,80,160\}$, see Figure~\ref{fig:convexAnalysis}. In particular, we consider the behavior in time of the residual, the misfit, the ensemble collapse and the value of the constraint computed on the ensemble mean. We observe that increasing the value of the ensembles allows a more accurate reconstruction. In fact, the solution is able to meet the discrepancy principle, i.e.~the misfit reaches the noise level, and the residual values decreases. However, the collapse to the mean slows down causing the increase of the ensemble spread. This in turn does not allow the mean to satisfy exactly the constraint when the simulation stops due to the discrepancy principle.

\begin{figure}[t!]
	\centering
	\includegraphics[width=\textwidth]{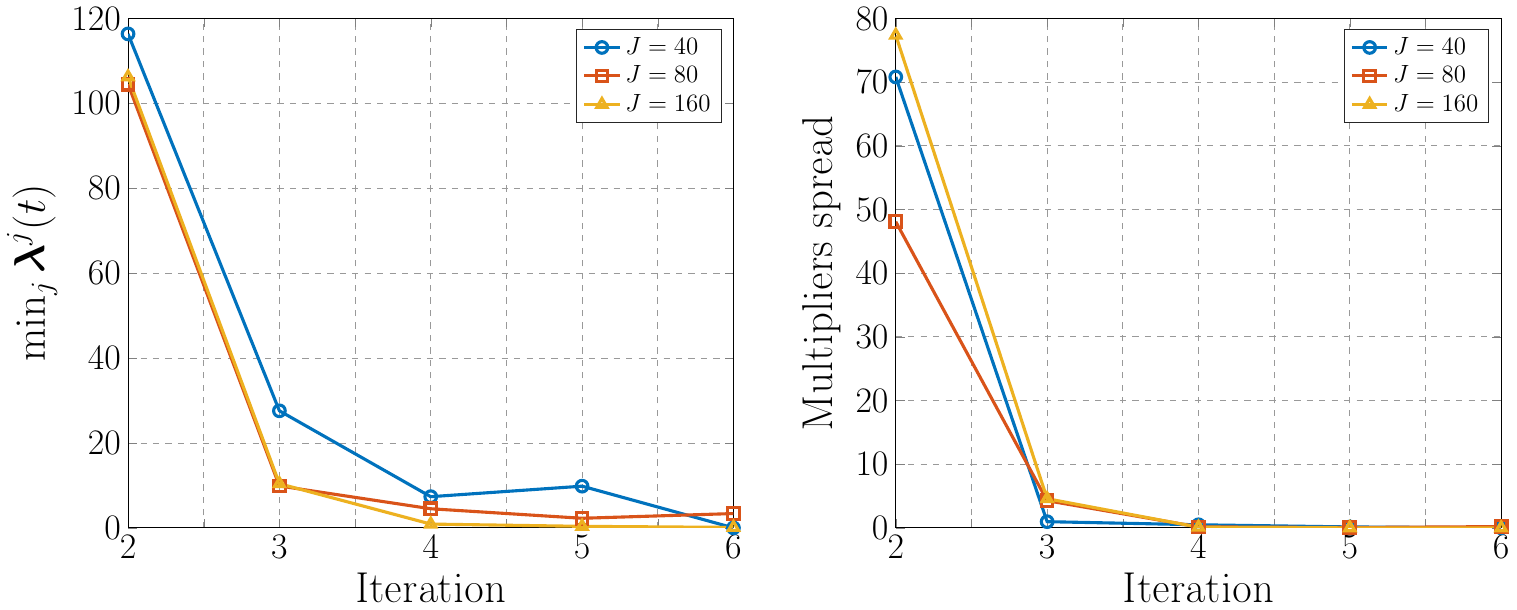}
	\caption{Time evolution of the minimum value of the multipliers (left) and of the multipliers spread around their mean (right) for the strictly convex nonlinear constraint.\label{fig:convexMultipliersAnalysis}}
\end{figure}

In Figure~\ref{fig:convexMultipliersAnalysis} we show the evolution of the minimum value of the multipliers, noticing that it remains always positive. Moreover, the spread of the multipliers to their mean decays to zero. This two results justify the ensemble spread observed in Figure~\ref{fig:convexAnalysis} and reflect the analysis provided in Section~\ref{sec:linearModel}, cf. the discussion after Proposition~\ref{th:meanKKT} and the statement of Proposition~\ref{th:spread}

\subsection{Quadratic non--convex constraint}

The second situation we consider is the case of a non--convex constraint by taking $\vec{A} = \begin{bmatrix} \vec{I} & \vec{0} \\ \vec{0} & -\vec{I} \end{bmatrix} \in \R^{d\times d}$. Then the constraint reads as
$$
\mathcal{A}(\vec{u}) = \sum_{k=1}^{\frac{d}{2}} \vec{u}_{(k)}^2 + \vec{b}_{(k)} \vec{u}_{(k)} + \sum_{\ell=\frac{d}{2}+1}^d \vec{b}_{(k)} \vec{u}_{(k)} - \vec{u}_{(\ell)}^2.
$$
We use the same setup as in Section~\ref{sec:convexExample}. Now the force function $u(x)$ is artificially modified to satisfy the constraint by determining $\{ \vec{u}_{(k)} \}_{k=1}^{\frac{d}{2}}$ in such a way $\vec{u}_{(k)}^2 + \vec{b}_{(k)} \vec{u}_{(k)} = \vec{u}_{(d-k+1)}^2 - \vec{b}_{(d-k+1)} \vec{u}_{(d-k+1)}$, for $k=1,\dots,\frac{d}{2}$. We proceed similarly for the initial ensembles.

\begin{figure}[t!]
	\centering
	\includegraphics[width=\textwidth]{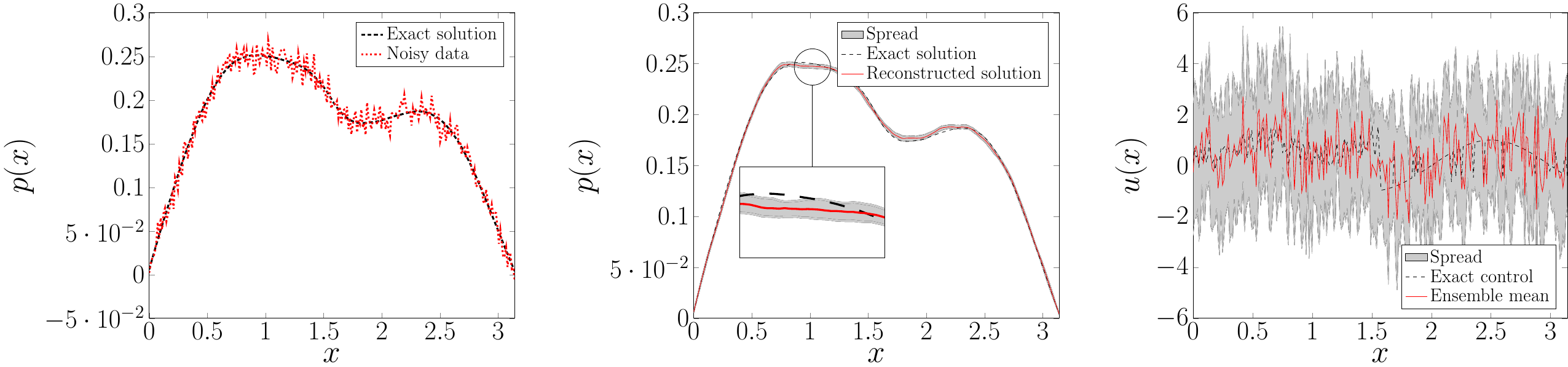}
	\caption{Solution provided by the constrained ensemble Kalman filter using $J=160$ ensembles on the non--convex nonlinear constraint.\label{fig:nonconvexCEnKF}}
\end{figure}

In Figure~\ref{fig:nonconvexCEnKF} we compute the solution obtained with $J=160$ ensembles. The left plot shows the solution $p(x)$ of the PDE~\eqref{eq:ellipticEq} and the noisy observations. The reconstruction of $p(x)$ is provided in the center panel and it is obtained by using the mean of the ensembles represented in the right panel. The gray areas gives information on the spread due to the ensembles. Compared to the previous example, here we notice that the ensembles are widely spread around the exact control function, cf. the right plot in Figure~\ref{fig:nonconvexCEnKF}. The method does not reproduce a very accurate control function but the application of the model still allows to obtain a good reconstruction of the solution.

\begin{figure}[t!]
	\centering
	\includegraphics[width=\textwidth]{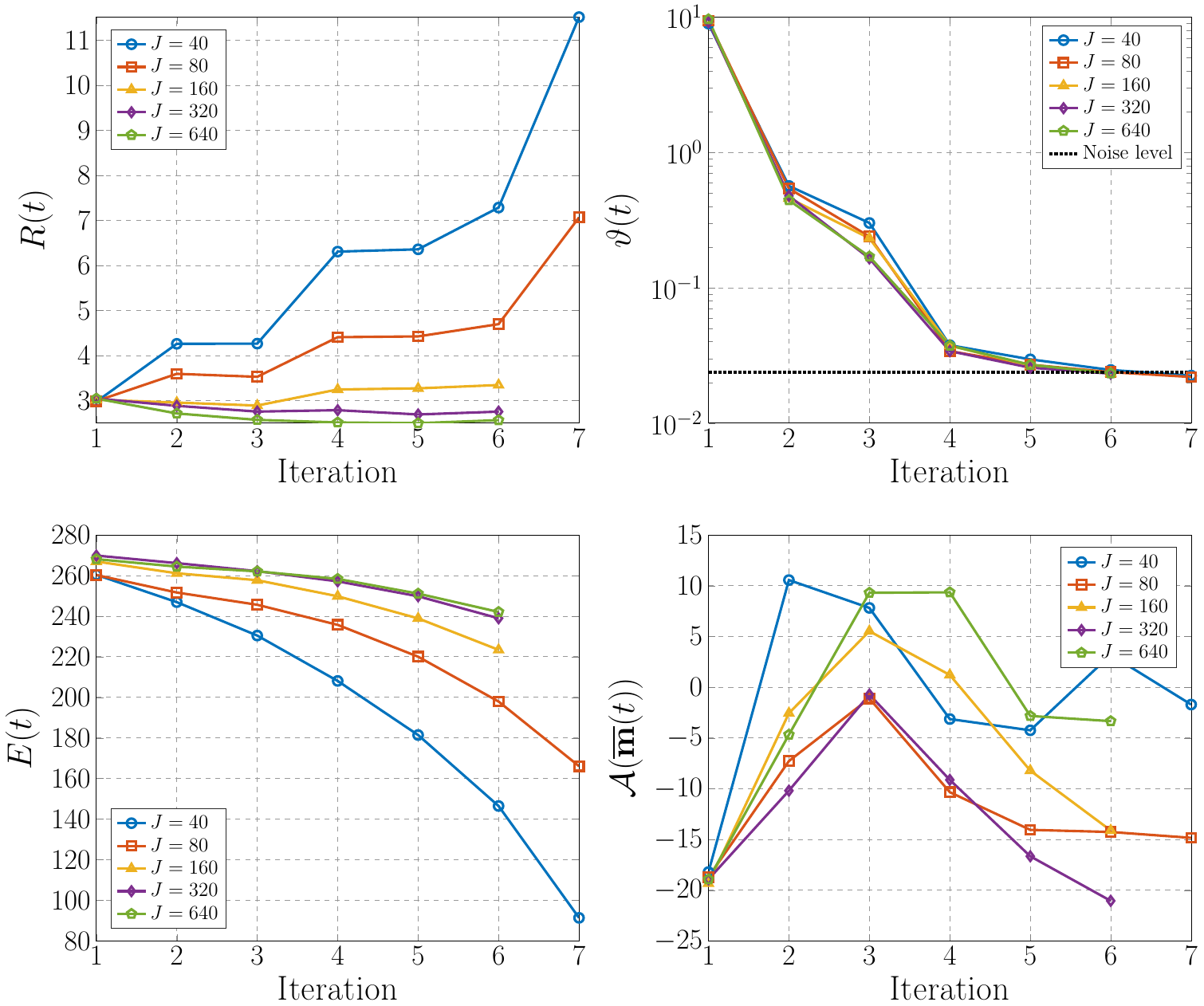}
	\caption{Time evolution of the residual (top left), the misfit (top right), the ensemble collapse (bottom left) and the value of the constraint computed on the ensemble mean (bottom right) for the non--convex nonlinear constraint.\label{fig:nonconvexAnalysis}}
\end{figure}

We perform the analysis of the method with five values of the ensemble size $J\in\{40,80,160,320,640\}$, see Figure~\ref{fig:nonconvexAnalysis}. Again we consider the behavior in time of the residual, the misfit, the ensemble collapse and the value of the constraint computed on the ensemble mean. We observe that the ensemble spread is decreasing in time very slowly, even with a small number of ensembles. This result also affects the value of the constraint computed on the mean of the ensembles, which is far from zero. The residual shows a decreasing behavior in time only with very large number of ensembles. However, in all cases the misfit is able to meet the noise level and therefore the discrepancy principle holds. These results can be motivated by the use of a non--convex constraint, 
and the method is providing a solution which minimizes the least square functional but the mean does not exactly satisfy the constraint. 

\section{Summary and perspectives} \label{sec:conclusion}

In this paper, inspired by~\cite{Stuart2019CEnKF}, we have focused on the formulation of the ensemble Kalman filter to solve constrained inverse problems. We have worked in the setting of optimization theory deriving first order necessary optimality conditions for the case of equality constraints in the space of the controls. We have observed that the method relaxes to the unconstrained one when linear equality constraints are considered. Therefore, we have mainly focused on nonlinear constraints and analyzed the method by computing continuous limits, in time and in the regime of infinitely many ensembles. The numerical results have shown that the method is able to provide solution to constrained inverse problems with quadratic convex and non--convex equality constraints.

We recall that in order to set up the mathematical formulation of the problem studied in this work, we have considered finite dimensional Hilbert spaces and equality constraints. As future perspectives it would be of interest to extend this study to arbitrary and possibly infinite dimensional Hilbert spaces, and to consider also inequality constraints.

\subsection*{Acknowledgments}
This research is funded by the Deutsche Forschungsgemeinschaft (DFG, German Research Foundation) under Germany's Excellence Strategy -- EXC-2023 Internet of Production -- 390621612 and supported also by DFG HE5386/15.

\appendix

\section{Detailed computations in the proof of Proposition~\ref{th:optimality}} \label{app:proof}

	\begin{itemize}
		\item Woodbury matrix identity:
		$$
		(\vec{M} + \vec{U} \vec{N} \vec{V})^{-1} = \vec{M}^{-1} - \vec{M}^{-1} \vec{U} (\vec{N}^{-1} + \vec{V} \vec{M}^{-1} \vec{U})^{-1} \vec{V} \vec{M}^{-1},
		$$
		for all matrices $\vec{M}$, $\vec{U}$, $\vec{N}$, $\vec{V}$.
		\item Application of the Woodbury matrix identity to $(\vec{H}^\intercal\vecsym{\Gamma}\vec{H}+\vec{C}_\varepsilon^{{n+1}^{-1}})^{-1}$:
		$$
		(\vec{H}^\intercal\vecsym{\Gamma}\vec{H}+\vec{C}_\varepsilon^{{n+1}^{-1}})^{-1} = \vec{C}_\varepsilon^{n+1} - \vec{C}_\varepsilon^{n+1} \vec{H}^\intercal (\vecsym{\Gamma}^{-1} + \vec{H}\vec{C}_\varepsilon^{n+1}\vec{H}^\intercal)^{-1}\vec{H}\vec{C}_\varepsilon^{n+1}
		$$
		\item From equation~\eqref{eq:vSol} to equation~\eqref{eq:vSol2}:
		\begin{align*}
		\vec{v}^{j,n+1} =& (\vec{H}^\intercal \vecsym{\Gamma} \vec{H} + \vec{C}_\varepsilon^{{n+1}^{-1}})^{-1} (\vec{H}^\intercal \vecsym{\Gamma} \vec{y}^{n+1} + \vec{C}_\varepsilon^{{n+1}^{-1}} \hat{\vec{v}}^{j,n+1}) \\ &- (\vec{H}^\intercal \vecsym{\Gamma} \vec{H} + \vec{C}_\varepsilon^{{n+1}^{-1}})^{-1} \vec{J}_{\tilde{\mathcal{A}}}^\intercal(\vec{v}^{j,n+1}) \vecsym{\lambda}^{j,n+1} \\
		=& (\vec{H}^\intercal \vecsym{\Gamma} \vec{H} + \vec{C}_\varepsilon^{{n+1}^{-1}})^{-1} \vec{C}_\varepsilon^{{n+1}^{-1}} \hat{\vec{v}}^{j,n+1} + (\vec{H}^\intercal \vecsym{\Gamma} \vec{H} + \vec{C}_\varepsilon^{{n+1}^{-1}})^{-1} \vec{H}^\intercal \vecsym{\Gamma} \vec{y}^{n+1} \\ &- (\vec{H}^\intercal \vecsym{\Gamma} \vec{H} + \vec{C}_\varepsilon^{{n+1}^{-1}})^{-1} \vec{J}_{\tilde{\mathcal{A}}}^\intercal(\vec{v}^{j,n+1}) \vecsym{\lambda}^{j,n+1} \\
		=& (\vec{C}_\varepsilon^{n+1} - \vec{C}_\varepsilon^{n+1} \vec{H}^\intercal (\vecsym{\Gamma}^{-1} + \vec{H}\vec{C}_\varepsilon^{n+1}\vec{H}^\intercal)^{-1}\vec{H}\vec{C}_\varepsilon^{n+1}) \vec{C}_\varepsilon^{{n+1}^{-1}} \hat{\vec{v}}^{j,n+1} \\
		&+ \vec{C}_\varepsilon^{n+1} \vec{H}^\intercal (\vec{H}\vec{C}_\varepsilon^{n+1}\vec{H}^\intercal+\vecsym{\Gamma}^{-1})^{-1} \vec{y}^{n+1} \\
		&-(\vec{C}_\varepsilon^{n+1} - \vec{C}_\varepsilon^{n+1} \vec{H}^\intercal (\vecsym{\Gamma}^{-1} + \vec{H}\vec{C}_\varepsilon^{n+1}\vec{H}^\intercal)^{-1}\vec{H}\vec{C}_\varepsilon^{n+1}) \vec{J}_{\tilde{\mathcal{A}}}^\intercal(\vec{v}^{j,n+1}) \vecsym{\lambda}^{j,n+1} \\
		=& \hat{\vec{v}}^{j,n+1} - \vec{C}_\varepsilon^{n+1} \vec{H}^\intercal (\vecsym{\Gamma}^{-1} + \vec{H}\vec{C}_\varepsilon^{n+1}\vec{H}^\intercal)^{-1}\vec{H} \hat{\vec{v}}^{j,n+1} \\
		&+ \vec{C}_\varepsilon^{n+1} \vec{H}^\intercal (\vec{H}\vec{C}_\varepsilon^{n+1}\vec{H}^\intercal+\vecsym{\Gamma}^{-1})^{-1} \vec{y}^{n+1} \\
		&-(\vec{C}_\varepsilon^{n+1} - \vec{C}_\varepsilon^{n+1} \vec{H}^\intercal (\vecsym{\Gamma}^{-1} + \vec{H}\vec{C}_\varepsilon^{n+1}\vec{H}^\intercal)^{-1}\vec{H}\vec{C}_\varepsilon^{n+1}) \vec{J}_{\tilde{\mathcal{A}}}^\intercal(\vec{v}^{j,n+1}) \vecsym{\lambda}^{j,n+1} \\
		=& \hat{\vec{v}}^{j,n+1} + \vec{C}_\varepsilon^{n+1} \vec{H}^\intercal (\vec{H} \vec{C}_\varepsilon^{n+1} \vec{H}^\intercal + \vecsym{\Gamma}^{-1})^{-1} (\vec{y}^{n+1} - \vec{H}\hat{\vec{v}}^{j,n+1}) \\
		&- (\vec{C}_\varepsilon^{n+1} - \vec{C}_\varepsilon^{n+1} \vec{H}^\intercal (\vec{H} \vec{C}_\varepsilon^{n+1} \vec{H}^\intercal + \vecsym{\Gamma}^{-1})^{-1} \vec{H} \vec{C}_\varepsilon^{n+1} ) \vec{J}_{\tilde{\mathcal{A}}}^\intercal(\vec{v}^{j,n+1}) \vecsym{\lambda}^{j,n+1}
		\end{align*}
		\item From equation~\eqref{eq:vSol2} to the constrained EnKF updated formula~\eqref{eq:updateControl}.\\
		We multiply equation~\eqref{eq:vSol2} by the observation matrix $\vec{H}^\perp$
		\begin{align*}
		\vec{u}^{j,n+1} = \vec{H}^\perp \vec{v}^{j,n+1} =& \vec{H}^\perp \hat{\vec{v}}^{j,n+1} + \vec{H}^\perp \vec{C}_\varepsilon^{n+1} \vec{H}^\intercal (\vec{H} \vec{C}_\varepsilon^{n+1} \vec{H}^\intercal + \vecsym{\Gamma}^{-1})^{-1} (\vec{y}^{n+1} - \vec{H}\hat{\vec{v}}^{j,n+1}) \\
		&- \vec{H}^\perp (\vec{C}_\varepsilon^{n+1} - \vec{C}_\varepsilon^{n+1} \vec{H}^\intercal (\vec{H} \vec{C}_\varepsilon^{n+1} \vec{H}^\intercal + \vecsym{\Gamma}^{-1})^{-1} \vec{H} \vec{C}_\varepsilon^{n+1} ) \vec{J}_{\tilde{\mathcal{A}}}^\intercal(\vec{v}^{j,n+1}) \vecsym{\lambda}^{j,n+1}.
		\end{align*}
		and, since the map $\vec{M}\to\vec{M}^{-1}$ is continuous over the set of invertible matrices, letting $\varepsilon\to 0$
		\begin{align*}
		\vec{u}^{j,n+1} =& \vec{H}^\perp \hat{\vec{v}}^{j,n+1} + \vec{H}^\perp \vec{C}^{n+1} \vec{H}^\intercal (\vec{H} \vec{C}^{n+1} \vec{H}^\intercal + \vecsym{\Gamma}^{-1})^{-1} (\vec{y}^{n+1} - \vec{H}\hat{\vec{v}}^{j,n+1}) \\
		&- \vec{H}^\perp (\vec{C}^{n+1} - \vec{C}^{n+1} \vec{H}^\intercal (\vec{H} \vec{C}^{n+1} \vec{H}^\intercal + \vecsym{\Gamma}^{-1})^{-1} \vec{H} \vec{C}^{n+1} ) \vec{J}_{\tilde{\mathcal{A}}}^\intercal(\vec{v}^{j,n+1}) \vecsym{\lambda}^{j,n+1}.
		\end{align*}
		Noticing that
		\begin{gather*}
		\vec{H}^\perp \hat{\vec{v}}^{j,n+1} = \vec{u}^{j,n}, \quad
		\vec{H}^\perp \vec{C}^{n+1} \vec{H}^\intercal = \vec{C}^{n+1}_{\vec{u}\vec{w}}, \quad 
		\vec{H} \vec{C}^{n+1} \vec{H}^\intercal = \vec{C}^{n+1}_{\vec{w}\vec{w}}, \quad \vec{H}\hat{\vec{v}}^{j,n+1} = \G(\vec{u}^{j,n}) \\
		\vec{H} \vec{C}^{n+1} \vec{J}_{\tilde{\mathcal{A}}}^\intercal(\vec{v}^{j,n+1}) = \vec{C}^{{n+1}^\intercal}_{\vec{u}\vec{w}} \vec{J}_{\mathcal{A}}^\intercal(\vec{u}^{j,n+1}), \quad \vec{H}^\perp \vec{C}^{n+1} \vec{J}_{\tilde{\mathcal{A}}}^\intercal(\vec{v}^{j,n+1}) = \vec{C}^{n+1}_{\vec{u}\vec{u}} \vec{J}_{\mathcal{A}}^\intercal(\vec{u}^{j,n+1})
		\end{gather*}
		we finally obtain
		\begin{align*}
		\vec{u}^{j,n+1} =& \vec{u}^{j,n} + \vec{C}^{n+1}_{\vec{u}\vec{w}} ( \vec{C}^{n+1}_{\vec{w}\vec{w}} + \vecsym{\Gamma}^{-1} )^{-1} (\vec{y}^{n+1} - \G(\vec{u}^{j,n}) )\\
		&+ \vec{C}^{n+1}_{\vec{u}\vec{w}} ( \vec{C}^{n+1}_{\vec{w}\vec{w}} + \vecsym{\Gamma}^{-1} )^{-1} \vec{C}^{{n+1}^\intercal}_{\vec{u}\vec{w}} \vec{J}_\mathcal{A}^\intercal(\vec{u}^{j,n+1}) \vecsym{\lambda}^{j,n+1} - \vec{C}^{n+1}_{\vec{u}\vec{u}} \vec{J}_\mathcal{A}^\intercal(\vec{u}^{j,n+1}) \vecsym{\lambda}^{j,n+1}.
		\end{align*}
	\end{itemize}

\section{List of most used symbols} \label{app:symbols}

\begin{tabular}{@{}ll@{}} 
	\textbf{Symbol} & \textbf{Description} \\[1ex]
	$\R, \ \mathbb{N}$ & spaces of real numbers and positive integers \\[0.5ex]
	$\vec{y}\in\R^K, \ K\in\mathbb{N}$ & observations or measurements \\[0.5ex]
	$\vec{u}\in\R^d, \ d\in\mathbb{N}$ & unknown control \\[0.5ex]
	$\G:\R^d \to \R^K$ & forward model \\[0.5ex]
	$\vec{G}:\R^d \to \R^K$ & forward linear model \\[0.5ex]
	$\vecsym{\Gamma}^{-1} \in\R^{K\times K}$ & covariance matrix of the noise \\[0.5ex]
	$\vecsym{\eta}\sim\mathcal{N}(\vec{0},\vecsym{\Gamma}^{-1})$ & normally distributed measurement noise \\[0.5ex]
	$\Phi(\vec{u},\vec{y})=\frac12 \left\| \vecsym{\Gamma}^{\frac12} (\vec{y} - \G(\vec{u})) \right\|^2$ & least square functional \\[0.5ex]
	$\mathcal{A}:\R^d\to\R^m, \ m\in\mathbb{N}$ & constraint functions \\[0.5ex]
	$\tilde{\mathcal{A}}:\R^{d+K}\to\R^m$ & extension of the constraint functions to the space $\R^{d+K}$ \\[0.5ex]
	$\vec{A}\in\R^{m\times d}$ & linear constraint functions \\[0.5ex]
	$\vec{J}_\mathcal{A}\in\R^{m\times d}$ & Jacobian matrix of the constraints \\[0.5ex]
	$\vec{u}^j\in\R^d, \ j\in\{1,\dots,J\}$ & $j$--th ensemble member \\[0.5ex]
	$\vecsym{\lambda}^j\in\R^,m, \ j\in\{1,\dots,J\}$ & multiplier related to the $j$--th ensemble member\\[0.5ex]
	$\vec{C}_{\vec{u}\vec{u}} \in\R^{d\times d}$ & empirical covariance of the ensemble \\[0.5ex]
	$\vec{C}_{\vec{u}\vec{w}} \in\R^{d\times K}$ & empirical covariance of the ensemble and its image with respect\\ & the forward model \\[0.5ex]
	$\vec{C}_{\vec{w}\vec{w}} \in\R^{K\times K}$ & empirical covariance of the image of the ensemble with respect\\ & the forward model \\[0.5ex]
	$\overline{\vec{m}}\in\R^d$ & first moment (or mean) of the ensemble \\[0.5ex]
	$\doverline{\vec{m}}\in\R^{d\times d}$ & second moment of the ensemble \\[0.5ex]
	$\vec{e}^j\in\R^{d}$ & $j$--th ensemble spread to the mean \\[0.5ex]
	$\vec{r}^j\in\R^{d}$ & $j$--th ensemble residual \\[0.5ex]
	$\vecsym{\vartheta}^j\in\R^{K}$ & $j$--th ensemble misfit \\[0.5ex]
\end{tabular}

\bibliographystyle{plain}
\bibliography{references}

\end{document}